\documentclass{article}

\def\DARKMODE{0}

\usepackage{amsmath,amssymb,amsthm}
\usepackage{fullpage}
\usepackage{verbatim}
\usepackage[noadjust]{cite} 
\usepackage{hyperref}
\usepackage[capitalize, nameinlink]{cleveref}
\usepackage{enumitem}
\usepackage[usenames,dvipsnames]{xcolor} 
\usepackage{tikz}
\usepackage[font=small,labelfont=bf, margin=.5in]{caption}

\usepackage[utf8]{inputenc}

\usepackage{parskip, amsfonts, amsmath, changepage, amsthm, setspace, geometry, verbatim, MnSymbol,chemarrow}

\usepackage[T1]{fontenc}

\usepackage{authblk}

\usepackage{xr}

\usepackage{MnSymbol, wasysym, mathtools}

\usepackage[tiny]{titlesec}

\usepackage{hyperref}

\newcommand*\Mod[1]{ \; (\textup{mod} \; #1 )}

\newcommand*{\Z}{\mathbb{Z}}
\newcommand*{\N}{\mathbb{N}}

\renewcommand{\phi}{\varphi}

\DeclarePairedDelimiter{\set}{\{}{\}}

\DeclareMathOperator{\grid}{Gr}

\newtheorem{Thm}{Theorem}[section]

\newtheorem{Lem}[Thm]{Lemma}
\newtheorem{Cor}[Thm]{Corollary}

\theoremstyle{definition}

\theoremstyle{remark}
\newtheorem{Ex}[Thm]{Example}
\newtheorem{Rmk}[Thm]{Remark}

\providecommand{\keywords}[1]
{
  \small	
  \textbf{\textit{Keywords---}} #1
}

\if\DARKMODE1

\else

\fi

\title{Chromatic numbers of circulants with indispensable generators}

\author[a]{Ferdous Ahmed}
\affil[a]{University of California, Irvine, ferdousa@uci.edu}

\author[b]{David Asraf}
\affil[b]{University of California, San Diego, dasraf@ucsd.edu}

\author[c]{David Bonds}
\affil[c]{affiliation and email address not provided}

\author[d]{Jonathan Davidson}
\affil[d]{California State University, Los Angeles, Dept. of Mathematics, 5151 State University Drive, Los Angeles, CA 91711, jdavids6@calstatela.edu}

\author[e]{Yunhee Jang}
\affil[e]{Rice University, Houston, TX, yj78@rice.edu}

\author[f]{Mike Krebs}
\affil[f]{California State University, Los Angeles, Dept. of Mathematics, 5151 State University Drive, Los Angeles, CA 91711, mkrebs@calstatela.edu}

\author[g]{Anand Prakash}
\affil[g]{California State University, Los Angeles, Dept. of Mathematics, 5151 State University Drive, Los Angeles, CA 91711, aprakas3@outlook.com}

\author[h]{Edgar Yak-De Padua}
\affil[h]{California State University, Los Angeles, Dept. of Mathematics, 5151 State University Drive, Los Angeles, CA 91711, eyakdep@calstatela.edu}

\date{\today}

\begin{document}

\maketitle

\keywords{graph, chromatic number, circulant, indispensable, minimal generating set}




\begin{abstract}
The Cayley graph $\text{Cay}(G,S)$ is the graph whose vertex set is the group $G$, where two vertices $x$ and $y$ are adjacent if and only if $xy^{-1}$ or $yx^{-1}$ lies in some fixed subset $S$ of $G$.  We call the elements of $S$ \emph{generators}. A \emph{circulant} graph is a Cayley graph where $G$ is finite and cyclic.  Chromatic numbers of circulant graphs have been studied by many authors.  A general formula due to Heuberger for the chromatic number of a circulant graph is known when $S$ has two elements, but no such formula is known when $S$ has three or more elements. We say that an element $x$ of $S$ is \emph{indispensable} if $S\setminus\{x\}$ does not generate $G$.  We say that $S$ is \emph{minimal} if every element of $S$ is indispensable.  By a result of Garcia-Marco and Knauer from 2024, if $G$ is nilpotent and $S$ is minimal, then $\text{Cay}(G,S)$ is $3$-colorable. In this article, we prove three main results. First, we give an upper bound for the chromatic number of a circulant graph with three generators, one of which is indispensable. Second, we present an alternate proof of the theorem of Garcia-Marco and Knauer for the case of abelian groups. Third, we apply these methods to provide a considerably more systematic (and potentially generalizable) proof of Heuberger's theorem for the chromatic number of circulant graphs with two generators.  Throughout this paper, our primary tool is the theory of Heuberger matrices, for which we provide a brief primer.\end{abstract}

\paragraph*{MSC2020 subject classification: }05C15

\section{Introduction}

\subsection{Overview}


The \emph{chromatic number} of a graph $X$, denoted $\chi(X)$, is the smallest number of colors needed to color the vertices of $X$ so that no two adjacent vertices are assigned the same color.  Given an abelian group $G$ and a subset $S$ of $G$, we say that $S$ is \emph{symmetric} if $-x\in S$ whenever $x\in S$.  (Throughout this paper, we will write all abelian groups using additive notation.)  Elements of $S$ are called \emph{generators}.  (This terminology can be ambiguous, so one must be careful of the context.  Sometimes we begin with a not-necessarily-symmetric set $\tilde{S}$ and then take $S$ to be the set of all elements of $\tilde{S}$ as well as their negatives, yet refer to the elements of $\tilde{S}$ as the generators.)  For such a pair $(G,S)$ we define the \emph{Cayley graph} $\text{Cay}(G,S)$ to be the graph with vertex set $G$ such that $x$ and $y$ are adjacent if and only if $x-y\in S$.  We define a \emph{circulant graph} to be a graph of the form $\text{Cay}(\mathbb{Z}_n,S)$, where $\mathbb{Z}_n$ denotes the group of integers modulo $n$, and $S$ is a symmetric subset of $\mathbb{Z}_n$.  Given integers $n, a_1,\dots,a_k$, we denote the circulant graph $\text{Cay}(\mathbb{Z}_n,\{\pm a_1,\dots,\pm a_k\})$ by $C_n(a_1,\dots,a_k)$.  For example, $C_n(1)$ is an $n$-cycle, and $C_5(1,2)$ is a complete graph on $5$ vertices.  By an abuse of terminology, in this context we often say that $C_n(a_1,\dots,a_k)$ has $k$ generators, namely the elements $a_1,\dots,a_k$.

Chromatic numbers of circulant graphs have been studied by many authors.  In \cite{Heuberger}, Heuberger completely determines the chromatic number of circulant graphs with two generators, i.e., those of the form $C_n(a,b)$.  See Thm. \ref{theorem-Heubergers} below for Heuberger's formula.  A complete solution for the three-generator case is not yet known.  Partial results have been obtained in \cite{Barajas-Serra, Ilic, Meszka, Nicoloso, Nicoloso-solo} and elsewhere.

Let $G$ be an abelian group, written additively, and let $S=\{\pm x_1,\dots,\pm x_m\}$ be a finite symmetric subset of $G$ such that $S$ generates $G$.  We say an $m$-tuple $(a_1,\dots,a_m)^t$ of integers (which we write as a column vector) is a \emph{relation} for $S$ if $a_1 x_1+\cdots +a_m x_m=0$.  The set of relations for $S$ is a subgroup $H$ of $\mathbb{Z}^m$.  We say an $m\times r$ integer matrix $M$ is a \emph{Heuberger matrix} for $(G,S)$ if the columns of $M$ generate $H$.  As discussed in \cite{Cervantes-Krebs}, the matrix $M$ completely encodes all information about the graph $\text{Cay}(G,S)$.  Conversely, given an $m\times r$ integer matrix $M$, we may construct an abelian group $G$ and a symmetric subset $S$ such that $M$ is a Heuberger matrix for $(G,S)$.  To wit, let $H$ be the subgroup of $\mathbb{Z}^m$ generated by the columns of $M$.  Let $G=\mathbb{Z}^m/H$, and let $S=\{H\pm e_1,\dots, H\pm e_m\}$, where $e_i$ is the column vector in $\mathbb{Z}^m$ with a $1$ in the $i$th position and $0$ elsewhere.  In this case we call $\text{Cay}(G,S)$ a \emph{standardized abelian Cayley graph} (or SACG for short), and we denote $\text{Cay}(G,S)$ by $M^{\text{SACG}}$.  If one begins with a pair $(G,S)$ and constructs $M$ as in the preceding paragraph, then indeed $\text{Cay}(G,S)$ is isomorphic to $M^{\text{SACG}}$.  Thus every connected finite-degree abelian Cayley graph is isomorphic to an SACG.

We say that an element $x$ of $S$ is \emph{indispensable} if $S\setminus\{\pm x\}$ does not generate $G$.  We say that $S$ is \emph{minimal} if every element of $S$ is indispensable.  (Observe the slight mismatch between these definitions and those in the abstract; this stems from the ambiguity in the term ``generators.'')

As we prove in Lemma \ref{lem:indispensable-Heuberger}, an indispensable generator corresponds to a non-primitive row (i.e., a row whose elements have $\text{gcd}>1$) of the corresponding Heuberger matrix.  The main idea of this article is to exploit that correspondence, using known facts about how certain matrix operations interact with chromatic numbers.

Our first main theorem (Theorem \ref{thm:three-gens-one-indispensable}) is an original result.  It provides an upper bound for chromatic numbers of circulant graphs with three generators, one of which is indispensable.  We impose some additional conditions as well, but as we discuss in Section \ref{sec:three-gen}, we lose no generality by doing so.

\begin{Thm}\label{thm:three-gens-one-indispensable}
Let $a,b,c,n$ be positive integers, where $c\mid n$ and $c\nmid a$.  Suppose that $\{a,b,c\}$ generates $\mathbb{Z}_n$, where $c$ is indispensable but $a$ is dispensable.  Let $\alpha, \beta$ be integers such that $a\equiv\alpha b+\beta c\;(\text{mod }n)$.  Then \[\chi(C_n(a,b,c))\leq\begin{cases}
5 & \text{ if }c=5\text{ and }\alpha\equiv\pm 2\;(\text{mod }5)\\
4 & \text{ if }c=13\text{ and }\alpha\equiv\pm 5\;(\text{mod }13)\\
4 & \text{ if }3\nmid c\text{ and }\alpha\equiv\pm 2\;(\text{mod }c)\text{ or }2\alpha\equiv\pm 1\;(\text{mod }c)\\
3 & \text{ otherwise}
\end{cases}\]
\end{Thm}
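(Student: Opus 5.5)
Since $c$ is indispensable, I will use a graph homomorphism onto a smaller circulant: $\chi(C_n(a,b,c)) \le \chi(Y)$ whenever $C_n(a,b,c)\to Y$. The natural target is $\mathbb{Z}_c$. Reduction modulo $c$ is a group homomorphism $\mathbb{Z}_n\to\mathbb{Z}_c$ because $c\mid n$, and it sends $c$ to $0$. That is bad, since it would create loops. So instead I will pass to the Heuberger matrix and use a homomorphism that kills $b$ rather than $c$.

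Concretely, with respect to $(a,b,c)$, a Heuberger matrix has the relation column $(1,-\alpha,-\beta)^t$ together with further columns. By Lemma \ref{lem:indispensable-Heuberger}, indispensability of $c$ makes the $c$-row non-primitive. Since $a$ is dispensable, I will use the column $(1,-\alpha,-\beta)^t$ to eliminate $a$, in the style of the standard Heuberger-matrix reductions from \cite{Cervantes-Krebs}. This rewrites the graph as a two-generator SACG in $b$ and $c$, with the original $a$-edges becoming edges along $\alpha b+\beta c$.

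The key step is then a homomorphism to $\mathbb{Z}_c$ given by $b\mapsto 1$, $c\mapsto m$ for a suitable $m$, and $a\mapsto \alpha+\beta m$. I will check that this is well defined on the relation lattice, using that $\{a,b,c\}$ generates $\mathbb{Z}_n$ and that $c$ is indispensable, so every relation has its $c$-coefficient constrained modulo the gcd appearing in the non-primitive row. A more robust alternative is the map $\mathbb{Z}_n\to\mathbb{Z}_c$, $x\mapsto x \bmod c$, composed with a multiplier. Indispensability should force $\langle a,b\rangle$ to be a proper subgroup, so $\gcd(a,b,n)=d>1$ with $c\nmid$ anything relevant. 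I would then map to $C_c(1,\alpha)$ or $C_{c}(\alpha)$-type targets, where $b\mapsto 1$, $a\mapsto\alpha$, and $c\mapsto$ something nonzero.

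The upshot should be a homomorphism $C_n(a,b,c)\to C_N(1,\alpha)$ for some $N$ related to $c$, possibly $N=c$ after adjusting by $\beta$. Once that is in place, Heuberger's two-generator theorem (Theorem \ref{theorem-Heubergers}) gives $\chi(C_c(1,\alpha))$. That value is $3$ generically. It is $4$ when $3\nmid c$ and $\alpha\equiv\pm2$, or when $2\alpha\equiv\pm1$, which is the same graph up to the multiplier $2$. The exceptional values are $c=5,\ \alpha\equiv\pm2$, where the target is $K_5$, and $c=13,\ \alpha\equiv\pm5$, which is the $C_{13}(1,5)$ exception. This matches the four cases of the statement exactly, which strongly suggests this is the intended route.

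The main obstacle is constructing the homomorphism correctly. It must send $c$ to a non-edge-creating element, i.e. not $0$, and it must respect all relations. I expect to choose the map $\mathbb{Z}_n\to\mathbb{Z}_c$ via $x\mapsto x/d$ on a suitable coset structure, or else to pass through the Heuberger matrix and use a column operation plus deletion of the $c$-row's gcd factor. Verifying well-definedness and that no loops arise, using $c\nmid a$, is where I expect to spend the effort.
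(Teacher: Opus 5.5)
Your proposal has a genuine gap, and it sits exactly at the step you call the key one.

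No homomorphism $\mathbb{Z}_n\to\mathbb{Z}_c$ can send $b\mapsto 1$ and $c\mapsto m\neq 0$. Every group homomorphism $\psi\colon\mathbb{Z}_n\to\mathbb{Z}_c$ satisfies $\psi(c)=c\,\psi(1)=0$, and reduction mod $c$ followed by a multiplier is no exception. In Heuberger-matrix terms, the relation column $(0,c,-b)^t$ forces $bm\equiv 0\pmod c$. But $\gcd(b,c)=\gcd(a,b,c,n)=1$, using $a\equiv\alpha b+\beta c$ and $c\mid n$, so $m\equiv 0$.

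In fact, in general no graph homomorphism $C_n(a,b,c)\to C_c(1,\alpha)$ of any kind exists. Take $n=24$, $(a,b,c)=(9,3,8)$, $\alpha=3$, $\beta=0$; this satisfies every hypothesis. Then $C_8(1,3)$ is bipartite, yet $0,8,16$ is a triangle in $C_{24}(9,3,8)$. This also exposes a misreading of Heuberger's theorem in your last paragraph. When $c$ is even and $\alpha$ is odd, $\chi(C_c(1,\alpha))$ is $2$, not $3$, so the ``$3$ otherwise'' in the statement cannot come from $C_c(1,\alpha)$ alone.

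The missing idea is to keep a second coordinate that absorbs the $c$-edges. This is where $d=\gcd(a,b,n)>1$, which you did identify, has to be used. Since $\gcd(d,c)=1$, one has $d\mid\beta$ and $d\mid n/c$. So you can append the column $(0,0,d)^t$ to the Heuberger matrix, clear the bottom row, and delete the resulting zero column:
\[
\begin{pmatrix}1&0&0\\-\alpha&c&0\\-\beta&-b&n/c\end{pmatrix}^{\text{SACG}}\longrightarrow\begin{pmatrix}1&0&0\\-\alpha&c&0\\0&0&d\end{pmatrix}^{\text{SACG}}.
\]
Concretely, this is reduction $\mathbb{Z}_n\to\mathbb{Z}_{cd}\cong\mathbb{Z}_c\times\mathbb{Z}_d$ (note $cd\mid n$), rescaled by units so that $b\mapsto(1,0)$, $a\mapsto(\alpha,0)$, $c\mapsto(0,1)$. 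The target is the Cartesian product of $C_c(1,\alpha)$ with $(d)^{\text{SACG}}$, which is a $d$-cycle or an edge. By the block-structure rule, $\chi(C_n(a,b,c))\le\max\{\chi(C_c(1,\alpha)),3\}$.

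Finally, $c\nmid a$ forces $\alpha\not\equiv 0\pmod c$, so $C_c(1,\alpha)$ is loopless. Heuberger's theorem applied to it then gives the four stated cases, with the floor of $3$ supplied by the $d$-cycle factor. This is precisely the paper's proof.
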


We prove this theorem in Section \ref{sec:three-gen}.  Usually Theorem \ref{thm:three-gens-one-indispensable} suffices to compute the chromatic number exactly, and we give an example in Section \ref{sec:three-gen} to illustrate this.  Moreover, we find a family of examples for which Theorem \ref{thm:three-gens-one-indispensable} produces the chromatic number, but so far as we are aware no other result in the literature does the same.

Using the same central idea, we also provide alternate proofs of some previously known results.  The first concerns \emph{minimal} generating sets, i.e., sets $S$ for which every element is indispensable.  In \cite{Garcia}, Garc\'ia-Marco and Knauer prove that a Cayley graph of a nilpotent group with a minimal generating set has chromatic number at most $3$.  (We remark that \cite{Barajas-Serra} contains some similar results for so-called ``quasi-minimal'' generating sets for circulant graphs.)  Using the method of Heuberger matrices, we recover the theorem of Garc\'ia-Marco and Knauer for the special case of abelian groups.  We discuss the details in Section \ref{sec:minimal}.

In \cite{Heuberger}, Heuberger furnishes a formula for the chromatic number of an arbitrary circulant graph with two generators.  Here we give an alternate proof that, we hope, may lend itself more readily to generalization.  Our jumping-off point is the observation that, by the aforementioned theorem of Garc\'ia-Marco and Knauer, we may assume that one of the generators is dispensable.  For such circulant graphs, we introduce a family of graph homomorphisms, which we call ``extension ladder homomorphisms.''  These allow us to reduce the problem to a finite set of small graphs, whereupon one can color by exhaustion.  We discuss the details in Section \ref{sec:two-gen}.

\subsection{Preliminaries}

\subsubsection{Circulant graphs}\label{subsubsection-circulants}

We record here a few basic facts we will need about circulant graphs.  These can be found, for example, in \cite{Heuberger}.

\begin{Lem}An abelian Cayley graph $\text{Cay}(G,S)$ is connected if and only if $S$ generates $G$.  In particular, the circulant graph $C_n(a_1,\dots,a_k)$ is connected if and only if $\gcd(a_1,\dots,a_k,n)=1$.\end{Lem}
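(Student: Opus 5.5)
The plan is to prove the two directions separately, working directly with the definition of the Cayley graph (adjacency means $x-y\in S$) and nothing else. Then I will specialize to $\mathbb{Z}_n$.

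\emph{$S$ generates $G$ implies connected.} Since $S$ is symmetric and $G$ is abelian, the subgroup generated by $S$ is the set of finite sums $s_1+\cdots+s_k$ with $s_i\in S$; no separate inverses are needed because $-s\in S$ whenever $s\in S$. Fix $x,y\in G$ and write $y-x=s_1+\cdots+s_k$ in this form. The sequence
\[x,\; x+s_1,\; x+s_1+s_2,\;\dots,\; x+s_1+\cdots+s_k=y\]
is then a walk from $x$ to $y$, because consecutive terms differ by some $s_i\in S$. Hence the graph is connected.

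\emph{Connected implies $S$ generates $G$.} Suppose the graph is connected and let $g\in G$. There is a walk $0=v_0,v_1,\dots,v_k=g$, and each difference $v_i-v_{i-1}$ lies in $S$ by the definition of adjacency. Telescoping gives $g=\sum_i (v_i-v_{i-1})$, so $g$ lies in the subgroup generated by $S$. Thus $S$ generates $G$.

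\emph{The circulant case.} Here $G=\mathbb{Z}_n$ and $S=\{\pm a_1,\dots,\pm a_k\}$, so it remains to show that $\{a_1,\dots,a_k\}$ generates $\mathbb{Z}_n$ if and only if $d=\gcd(a_1,\dots,a_k,n)=1$.
\begin{itemize}
\item If $d=1$, Bézout's identity gives integers $c_i,m$ with $\sum c_i a_i+mn=1$. Reducing mod $n$ shows that $1$ lies in the subgroup generated by the $a_i$, so that subgroup is all of $\mathbb{Z}_n$.
\item If $d>1$, every $a_i$ lies in the subgroup $d\mathbb{Z}_n$. This is well defined because $d\mid n$, and it is proper because it has order $n/d<n$. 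Hence the $a_i$ do not generate $\mathbb{Z}_n$.
\end{itemize}

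There is no real obstacle. The only point needing care is that symmetry of $S$ lets every group element be written as a sum of elements of $S$ alone, which is what converts words into walks.
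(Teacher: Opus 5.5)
Your proof is correct and complete. The paper gives no proof of this lemma; it records it as a standard fact and cites Heuberger. Your argument is the standard one: symmetry of $S$ turns sums of elements of $S$ into walks and back. For the circulant case you use B\'ezout when $\gcd(a_1,\dots,a_k,n)=1$, and the proper subgroup $d\mathbb{Z}_n$ of order $n/d$ when the gcd is larger. Your walk argument in fact shows that the component containing $x$ is exactly the coset $x+\langle S\rangle$. That is the ``disjoint union of copies of $\text{Cay}(G_0,S)$'' remark the paper makes right after the lemma.
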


We remark that if $d=\gcd(a_1,\dots,a_k,n)>1$, then $C_n(a_1,\dots,a_k)$ can be written as a disjoint union of $d$ graphs, each of which is isomorphic to $C_{n/d}(a_1/d,\dots,a_k/d)$, in which case we have that \[\chi(C_n(a_1,\dots,a_k))=\chi(C_{n/d}(a_1/d,\dots,a_k/d)).\]More generally, an abelian Cayley graph $\text{Cay}(G,S)$ is a disjoint union of the graphs $\text{Cay}(G_0,S)$, where $G_0$ is the subgroup of $G$ generated by $S$.  So in our study of chromatic numbers of abelian Cayley graphs (including circulant graphs), we lose nothing by restricting our attention to connected graphs.

\begin{Lem}\label{lemma-circulant-bipartite}
Suppose $X=C_n(a_1,\dots,a_k)$ is a connected circulant graph with $0<a_i<n$ for all $1\leq i\leq k$.  Then $\chi(X)=2$ if and only if $n$ is even and $a_1,\dots,a_k$ are all odd.\end{Lem}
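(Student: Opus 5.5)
We prove both directions directly, using only the structure of a circulant graph and the connectedness criterion from the preceding lemma.

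\emph{If direction.} Suppose $n$ is even and every $a_i$ is odd. Colour each vertex $x\in\mathbb{Z}_n$ by the parity of $x$. This is well defined because $n$ is even, so parity is a homomorphism $\mathbb{Z}_n\to\mathbb{Z}_2$. Any edge joins $x$ to $x\pm a_i$, which changes parity since $a_i$ is odd, so the colouring is proper. Hence $\chi(X)\leq 2$. The hypothesis $0<a_i<n$ makes each $a_i$ nonzero in $\mathbb{Z}_n$, so $X$ has at least one edge and $\chi(X)\geq 2$.

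\emph{Only if direction.} Suppose $\chi(X)=2$ and let $c\colon\mathbb{Z}_n\to\{0,1\}$ be a proper $2$-colouring.
\begin{enumerate}
\item Since $X$ is connected, a proper $2$-colouring is unique up to swapping the colours. Translation $x\mapsto x+g$ is a graph automorphism, so $c(x+g)$ is also a proper $2$-colouring. Therefore $c(x+g)-c(x)$ is a constant $\phi(g)\in\mathbb{Z}_2$, independent of $x$.
\item Step 1 shows that $\phi\colon\mathbb{Z}_n\to\mathbb{Z}_2$ is a group homomorphism with $\phi(a_i)=1$ for every $i$, since adjacent vertices get different colours.
\item Because $\phi(a_i)=1$, the map $\phi$ is nonzero, hence surjective.
\item A surjective homomorphism $\mathbb{Z}_n\to\mathbb{Z}_2$ exists only if $2\mid n$. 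In that case it is unique: it is reduction of the representative modulo $2$, and this is well defined because $n$ is even.
\item Taking representatives $0<a_i<n$, the condition $\phi(a_i)=1$ says exactly that each $a_i$ is odd.
\end{enumerate}

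An alternative route to step 4 is to note that connectedness means $\gcd(a_1,\dots,a_k,n)=1$. Then $X$ contains the closed walk that adds $a_1$ a total of $n/\gcd(a_1,n)$ times. Bipartiteness forces closed walks to have even length, and with the parity homomorphism in hand this again yields that $n$ is even and each $a_i$ is odd.

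The only step needing care is step 1, the claim that the colour difference under translation is constant. It relies on connectedness, since otherwise the two colours could be swapped independently on different components. That is exactly why connectedness is assumed in the statement.
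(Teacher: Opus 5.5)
Your argument is correct and complete. The paper does not prove this lemma itself: it lists it among basic facts about circulants that ``can be found, for example, in'' Heuberger's paper, so there is no in-paper proof to match.

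Your route is intrinsic. Connectedness plus translation-invariance show that any proper $2$-colouring is, up to swapping colours, a homomorphism $\phi\colon\mathbb{Z}_n\to\mathbb{Z}_2$ with $\phi(a_i)=1$. The rest is the observation that such a $\phi$ exists only when $2\mid n$, and is then reduction mod $2$. Steps 1--3 go through unchanged for any connected abelian Cayley graph. So you have in effect proved the general criterion that $\text{Cay}(G,S)$ is bipartite iff some homomorphism $G\to\mathbb{Z}_2$ sends every generator to $1$; only steps 4--5 use that $G$ is cyclic.

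The argument closest to the paper's own toolkit would instead quote the Cervantes--Krebs fact that $\chi(M^{\text{SACG}})=2$ iff every column of a Heuberger matrix $M$ has even entry sum. Equivalently, every relation $(m_1,\dots,m_k)^t$ with $\sum m_ia_i\equiv 0\pmod n$ has $\sum m_i$ even. Applying this to the relations $ne_1$ and $a_je_i-a_ie_j$ gives $n$ even and all $a_i$ of the same parity, and $\gcd(a_1,\dots,a_k,n)=1$ rules out ``all even.'' The converse is the same parity computation as in your ``if'' direction. That version is shorter but rests on a black box, which your proof essentially establishes in this special case.

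One small caution about your closing aside. Closed walks along a single generator only force $n/\gcd(a_i,n)$ to be even, i.e.\ $\nu_2(a_i)<\nu_2(n)$, which gives $n$ even but not $a_i$ odd. For instance, $C_8(1,2)$ passes that test yet contains the triangle $0,1,2$. So the oddness of the $a_i$ really does come from $\phi$ being reduction mod $2$, as you indicate, and the appeal to $\gcd(a_1,\dots,a_k,n)=1$ in that remark plays no role.
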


The graph $X$ in Lemma \ref{lemma-circulant-bipartite} would have loops if $a_i$ were divisible by $n$ for some $i$.  The condition $0<a_i<n$ prohibits this annoying special case.  In all other cases one can, without changing $X$, reduce $a_i$ modulo $n$ so that this condition is met.

\subsubsection{Heuberger matrices}
\label{subsection-Heuberger-matrices}

We record here (rather tersely) a few basic facts we will need about Heuberger matrices.  Proofs of these can be found in \cite{Cervantes-Krebs, Cervantes-Krebs-small-cases}.  Although all needed results are briefly summarized below, in our proofs we refer to specific theorems within those articles whenever we invoke them.

For a $1\times 1$ matrix, we have that $(n)^{\text{SACG}}$ is an $n$-cycle if $|n|\geq 3$; a path of length $2$ if $n=\pm 2$; a single vertex with a loop if $n=\pm 1$; and a doubly infinite path graph if $n=0$.

If $M'$ is obtained from $M$ in any of the following ways, then the corresponding graphs are isomorphic: permuting columns; multiplying a column by $-1$; adding an integer multiple of one column to another; permuting rows; multiplying a row by $-1$; deleting a zero column.

Deleting a zero row does not change the chromatic number of the corresponding graph.

If $M'$ is obtained from $M$ in any of the following ways, then there is a graph homomorphism from $M^{\text{SACG}}$ to $(M')^\text{SACG}$: merging two rows by adding them; appending a column; reducing a column by a common factor.  We use the following notation to indicate that such a homomorphism exists:\[M^{\text{SACG}}\xrightarrow{\ocirc} (M')^\text{SACG}.\]If there is a graph homomorphism from a graph $X$ to a graph $Y$, then $\chi(X)\leq\chi(Y)$.

We have that $\chi(M^{\text{SACG}})=2$ if and only if for each column of $M$, the sum of its entries is even.

If $M$ has a block structure --- that is, if \[M=\begin{pmatrix}M_1 & 0\\
0 & M_2\end{pmatrix}\] for some integer matrices $M_1, M_2$ --- then $\chi(M^{\text{SACG}})=\max(\chi(M_1^{\text{SACG}}),\chi(M_2^{\text{SACG}}))$.

We say that an $n$-tuple $(a_1,\dots,a_n)$ of integers is \emph{primitive} if $\text{gcd}(a_1,\dots,a_n)=1$.  We note that such a tuple can be non-primitive either if all of its entries are zero, or otherwise if $\text{gcd}(a_1,\dots,a_n)>1$. 

\section{Main results}

\subsection{A key lemma}

The jumping-off point for each of our main theorems is the following lemma, which shows that in an abelian Cayley graph, a generator is indispensable if and only if its corresponding row in a Heuberger matrix is non-primitive.

\begin{Lem}\label{lem:indispensable-Heuberger}Let $G$ be an abelian group; let $a_1, \dots, a_k$ be elements of $G$ that generate $G$; and let $M$ be a corresponding Heuberger matrix.  Then $a_i$ is indispensable if and only if the $i$th row of $M$ is non-primitive.
\end{Lem}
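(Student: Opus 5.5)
We will use the surjection $\phi:\mathbb{Z}^k\to G$, $e_j\mapsto a_j$, whose kernel is the relation group $H$ generated by the columns of $M$. The idea is to view the $i$th row of $M$ as the values of the coordinate functional $\pi_i:\mathbb{Z}^k\to\mathbb{Z}$ on a generating set of $H$. Hence the gcd $d$ of the $i$th row (with $d=0$ if the row is zero) satisfies $\pi_i(H)=d\mathbb{Z}$. So the row is primitive if and only if some relation $h\in H$ has $i$th coordinate equal to $1$; this uses that integer combinations of the row entries produce exactly the multiples of the gcd. The statement thus reduces to: $a_i$ is dispensable if and only if there is a relation whose $i$th coordinate is $\pm1$.

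For the direction ``primitive $\Rightarrow$ dispensable'', take $h=(c_1,\dots,c_k)^t\in H$ with $c_i=1$. Then $a_i=-\sum_{j\neq i}c_ja_j$, so $a_i$ lies in the subgroup generated by the other $a_j$. That subgroup therefore contains all the generators and equals $G$, so $a_i$ is dispensable.

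For the direction ``dispensable $\Rightarrow$ primitive'', suppose $\{a_j:j\neq i\}$ generates $G$. Then $a_i=\sum_{j\neq i}c_ja_j$ for some integers $c_j$. So $e_i-\sum_{j\neq i}c_je_j$ is a relation, lying in $H$, with $i$th coordinate $1$. Hence $1\in\pi_i(H)=d\mathbb{Z}$, so $d=1$ and the row is primitive. Taking contrapositives of both directions gives the lemma as stated, with indispensable corresponding to non-primitive, including the case of an all-zero row.

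There is no real obstacle here. The only point needing care is the identification $\pi_i(H)=\gcd(\text{row }i)\mathbb{Z}$, which follows because $H$ is generated by the columns of $M$, so $\pi_i(H)$ is the subgroup of $\mathbb{Z}$ generated by the row entries. A second small point is the convention from the paper that indispensability means removing $\pm a_i$. Since $-a_i$ generates the same subgroup as $a_i$, this convention changes nothing.
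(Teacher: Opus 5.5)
Your proof is correct and follows essentially the same route as the paper. Both directions rest on the fact that a relation with $i$th coordinate $1$ exists exactly when the $i$th row has gcd $1$, via a B\'ezout combination of the columns, which is what the paper does after reducing to $i=k$; your identification $\pi_i(H)=d\mathbb{Z}$ just packages this into a single observation and covers the zero-row case uniformly.
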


\begin{proof}  By permuting rows/generators, we may assume without loss of generality that $i=k$.

First suppose that $a_k$ is indispensable and that $r_k$ is not a zero row, where $r_k=(b_{k1}\;\cdots\;b_{kn})$ is the $k$th row of $M$.  Let $d$ be the gcd of the entries of $r_k$.  If $d=1$, then there exist integers $c_1,\dots,c_n$ such that $c_1b_{k1}+\cdots+c_nb_{kn}=1$.  Let $y_1,\dots,y_n$ be the columns of $M$. Then $c_1 y_1^t+\cdots+c_n y_n^t=(w_1\;\cdots\;w_{k-1}\;1)$ for some integers $w_1,\dots,w_{k-1}$.  (The superscript $t$ denotes transpose.)  By the definition of $M$, we get that $a_k=-w_1 a_1-\cdots-w_{k-1} a_{k-1}$, violating indispensability.  Thus $r_k$ is non-primitive.

Conversely, if $a_k$ is dispensable, then there exist integers $w_1,\dots,w_{k-1}$ such that $a_k=w_1 a_1+\cdots+w_{k-1}a_{k-1}$, so the column vector \((-w_1\;\cdots\;-w_{k-1}\;1)^t\) can be expressed as a linear combination of $y_1,\dots,y_n$ with coefficients in $\mathbb{Z}$.  Thus $r_k$ is primitive.\end{proof}

\subsection{Circulant graphs with three generators, one of which is indispensable}\label{sec:three-gen}

In this section we provide an upper bound (as stated in Theorem \ref{thm:three-gens-one-indispensable}) for the chromatic number of a circulant graph with three generators, one of which is indispensable.  Moreover, we show that in many cases this is sufficient to compute the chromatic number exactly.  Prior to proving Theorem \ref{thm:three-gens-one-indispensable}, we first explain how, for an arbitrary graph of this form, it is either isomorphic to one for which the conditions of this theorem are satisfied, or else we can find its chromatic number another way.

Let $a,b,c$ be positive integers, and suppose that $S=\{a,b,c\}$ generates $\mathbb{Z}_n$ and that $c$ is indispensable.  (To avoid notational clutter, we use the same variable to denote an integer as well as its reduction modulo $n$.)  So we have that $\text{gcd}(a,b,c,n)=1$ but $\text{gcd}(a,b,n)>1$.  Consider the circulant graph $C_n(a,b,c)$.

We begin by showing that it suffices to consider such graphs when $c$ divides $n$.  As a first step towards that end, let $w$ be the largest integer which divides $c$ but is relatively prime to $n$.  Taking $w^{-1}$ to be a multiplicative inverse of $w$ modulo $n$, we have that $C_n(a,b,c)\cong C_n(w^{-1}a,w^{-1}b,w^{-1}c)$, as multiplication by $w^{-1}$ simultaneously defines a group as well as graph isomorphism.  Hence $w^{-1}c$ is indispensable in the generating set for the latter graph.  Moreover, $w^{-1}c$ is congruent modulo $n$ to an integer, every prime factor of which also divides $n$.  Replacing $a,b,c$ with $w^{-1}a,w^{-1}b,w^{-1}c$ we see that we may (and now shall) assume our graph $C_n(a,b,c)$ has the property that if a prime $p$ divides $c$, then $p$ divides $n$.  For a prime $p$ and a positive integer $x$, let $\nu_p(x)$ be the largest integer $m$ such that $p^m\mid x$.  Let $\rho$ be the product of all primes $p$ such that $\nu_p(c)\leq \nu_p(n)$.  One can show that $c+\rho n=\psi u$, where $\psi\mid n$ and $\text{gcd}(u,n)=1$.  Replacing $a,b,c$ with $u^{-1}a,u^{-1}b,\psi$ yields the desired result.  In this way we see that we lose no generality by restricting our attention to generating sets $\{a,b,c\}$ for $\mathbb{Z}_n$ in which $c$ is indispensable and $c\mid n$.

Indeed, we may restrict further still.  By Thm. \ref{thm:abelian-minimal-3-colorable}, if $\{a,b,c\}$ is minimal, then $\chi(C_n(a,b,c))\leq 3$, and Lemma \ref{lemma-circulant-bipartite} then allows us to quickly determine whether the chromatic number is $2$ or $3$.  So without loss of generality, we can assume that $a$ is dispensable.  Thus there exist integers $\alpha, \beta$ such that $a\equiv\alpha b+\beta c\;(\text{mod }n)$.

\begin{proof}[Proof of Theorem \ref{thm:three-gens-one-indispensable}]Let $X=C_n(a,b,c)$.  We have the following Heuberger matrix for $X$:\[M=\begin{pmatrix}
1 & 0 & 0\\
-\alpha & c & 0\\
-\beta & -b & \frac{n}{c}
\end{pmatrix},\] as one can show by  direct computation, or else by applying a formula in \cite{Cervantes_2023-preprint}.  (Here we refer to the preprint rather than the published version of this article, because it contains a corrected version of the relevant formula.)

Let $d=\text{gcd}(a,b,n)>1$.  We see that $d$ divides $\beta$, $b$, and $n/c$.  We obtain a graph homomorphism by appending the column $(0,0,d)^t$ to $M$.  We then use column operations to eliminate the bottom three entries of $M$, whereupon we may delete the zero column.  In other words, we have the following:

\[M^\text{SACG}\xrightarrow{\ocirc}\begin{pmatrix}
1 & 0 & 0 & 0\\
-\alpha & c & 0 & 0\\
-\beta & -b & \frac{n}{c} & d
\end{pmatrix}^\text{SACG}=\begin{pmatrix}
1 & 0 & 0 & 0\\
-\alpha & c & 0 & 0\\
0 & 0 & 0 & d
\end{pmatrix}^\text{SACG}= \begin{pmatrix}
1 & 0 & 0\\
-\alpha & c & 0\\
0 & 0 & d
\end{pmatrix}^\text{SACG}.\]

Using results from Section \ref{subsection-Heuberger-matrices}, we then get that \[\chi(X)\leq \max\left\lbrace\chi\left(\begin{pmatrix}
1 & 0\\
-\alpha & c
\end{pmatrix}^\text{SACG}\right),\chi\left((d)^\text{SACG}\right)\right\rbrace.\]  Because $d>1$, we have that $(d)^\text{SACG}$ is either a $d$-cycle (if $d\geq 3$) or an edge (if $d=2$), so $\chi\left((d)^\text{SACG}\right)\leq 3$.  Also, $\begin{pmatrix}
1 & 0\\
-\alpha & c
\end{pmatrix}^\text{SACG}$ is isomorphic to $C_c(1,\alpha)$.  Because $c\mid n$ and $c\nmid a$, we must have that $\alpha\nequiv 0\;(\text{mod }c)$.  The result now follows from Heuberger's formula (Theorem \ref{theorem-Heubergers}) for the chromatic number of a circulant graph with two generators.\end{proof}

We remark that equality may fail in Theorem \ref{thm:three-gens-one-indispensable}.  For example, take $a=2, b=1, c=5, n=15, \alpha=2, \beta=0$.  Theorem \ref{thm:three-gens-one-indispensable} tells us that $\chi(C_n(a,b,c))\leq 5$. However, reduction modulo $3$ gives us a proper $3$-coloring, whereupon by Lemma \ref{lemma-circulant-bipartite} we have that $\chi(C_n(a,b,c))=3$.

More often, though, the upper bound in Theorem \ref{thm:three-gens-one-indispensable} is achieved.  In our next example, we examine two families of circulant graphs for which this occurs.

\begin{Ex}
Let $p$ and $q$ be distinct primes with $q\geq 17$ and $p\geq 5$.  Let $n=pq$.  Let $X= C_n(p, 3p, q)$.  We take $a=p, b=3p, c=q$ and apply Theorem \ref{thm:three-gens-one-indispensable}.  Because $c\geq 17$, neither of the first two cases applies.  Also, $a\equiv \alpha b+\beta c\;(\text{mod}\;n)$ gives us that $p\equiv 3\alpha p\;(\text{mod}\;c)$, which implies that neither $\alpha\equiv\pm 2\;(\text{mod }c)\text{ nor }2\alpha\equiv\pm 1\;(\text{mod }c)$ can hold (again using that $c\geq 17$), so the third case does not apply either.  Hence $\chi(X)\leq 3$.  By Lemma \ref{lemma-circulant-bipartite}, we get that $\chi(X)= 3$.

We now show that other formulae in the literature do not duplicate this computation.  We do not have $\text{gcd}(n,a)=3$ for any generator $a$ in $S=\{p, 3p, q\}$, so the main theorem in \cite{Nicoloso-solo} does not apply.   Thm. 1.3 in \cite{Nicoloso-solo} similarly doesn’t apply, because we cannot have $a+b=c$ or $a+b=n-c$ no matter how we label $a,b,c$.  We note that for $w$ coprime to $n$, the generating set $\{wa,wb,wc\}$ produces a Cayley graph isomorphic to $X$, but the condition $a+b=c$ or $a+b=n-c$ is invariant with respect to such isomorphisms.  Moreover, Thm. 1.4 in \cite{Nicoloso-solo} does not apply, because the condition $n\geq 4bc$ is not met.  Multiplication by $w$ as above cannot produce a new generating set $wS$ which does meet that condition, as one can see by considering that $wS$ must still contain two elements divisible by $p$ and one divisible by $q$.\end{Ex}

\begin{Ex}
Let $p$ and $q$ be distinct primes with $q\geq 7$.  Let $n=pq$.  Let $X= C_n(2p, p, q)$.  We take $a=2p, b=p, c=q, \alpha=2, \beta=0$.  Applying Theorem \ref{thm:three-gens-one-indispensable} gives us $\chi(X)\leq 4$.  But $X$ contains $C_n(2p, p)$ as a subgraph, and $C_n(2p, p)$ is a disjoint union of $p$ copies of $C_q(2, 1)$, which has chromatic number $4$ by Theorem \ref{theorem-Heubergers}.  Therefore $\chi(X)=4$.
\end{Ex}

\subsection{Abelian Cayley graphs with minimal generating sets}\label{sec:minimal}

In this section, we prove the following theorem using the method of Heuberger matrices.

\begin{Thm}[\cite{Garcia}]\label{thm:abelian-minimal-3-colorable}Let $G$ be an abelian group, and suppose that $S$ is a finite minimal generating set for $G$.  Then $\chi(\text{Cay}(G,S))\leq 3.$\end{Thm}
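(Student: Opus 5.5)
The idea is to reuse the trick from the proof of Theorem \ref{thm:three-gens-one-indispensable}, applied to every row at once. Write $S=\{\pm x_1,\dots,\pm x_m\}$ and let $M$ be an $m\times r$ Heuberger matrix for $(G,S)$, so that $\text{Cay}(G,S)\cong M^{\text{SACG}}$. Since $S$ is minimal, every $x_i$ is indispensable. By Lemma \ref{lem:indispensable-Heuberger}, every row of $M$ is therefore non-primitive. For each $i$, let $d_i\geq 0$ be the gcd of the entries of the $i$th row, with $d_i=0$ exactly when that row is zero. Non-primitivity says precisely that $d_i\neq 1$ for every $i$.

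\textbf{Step 1 (the homomorphism).} For each $i$ with $d_i>1$, append the column $d_ie_i$ to $M$. Each appended column gives a graph homomorphism, and these compose, so
\[M^{\text{SACG}}\xrightarrow{\ocirc}(M\mid D)^{\text{SACG}},\]
where $D$ consists of the appended columns. Every entry of $M$ in row $i$ is a multiple of $d_i$. Hence subtracting suitable integer multiples of the columns $d_ie_i$ from each original column turns every original column into the zero vector; for rows with $d_i=0$ the entries are already zero. These column operations preserve the isomorphism class, and deleting zero columns does too. The result is the diagonal matrix $\operatorname{diag}(d_1,\dots,d_m)$, except that rows with $d_i=0$ simply become zero rows.

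\textbf{Step 2 (coloring the target).} The target has block structure with $1\times 1$ blocks. Its chromatic number is therefore the maximum of $\chi((d_i)^{\text{SACG}})$ over the rows with $d_i>1$; zero rows can be deleted without changing the chromatic number. For $d_i\geq 3$, $(d_i)^{\text{SACG}}$ is a $d_i$-cycle, so its chromatic number is at most $3$. For $d_i=2$ it is a path of length $2$, so its chromatic number is $2$. If every row is zero, which is only possible when $G\cong\mathbb{Z}^m$ is free on the $x_i$, we instead keep one zero row as the block $(0)$. This is a doubly infinite path with chromatic number $2$, and the remaining blocks are still bounded by $3$. Since $\chi$ is monotone under homomorphisms, $\chi(\text{Cay}(G,S))\leq 3$.

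\textbf{Main obstacle.} I expect the delicate part to be the bookkeeping of degenerate cases rather than the main idea. This means zero rows (where $d_i=0$), a possible $x_i=0$ giving loops, and whether repeated or symmetric generators are allowed.
\begin{itemize}
\item A generator $x_i=0$ is always dispensable, so minimality excludes it.
\item The case $d_i=1$, which would produce a loop and break colorability, is exactly what Lemma \ref{lem:indispensable-Heuberger} rules out.
\end{itemize}
So the hardest step is checking that the homomorphism and isomorphism facts from Section \ref{subsection-Heuberger-matrices} apply verbatim when $G$ is infinite and $M$ has zero rows. I would verify this first.
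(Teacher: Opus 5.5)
Your proposal is correct and follows essentially the same argument as the paper. You append the columns $d_ie_i$, clear the original columns by column operations, and bound the resulting diagonal SACG blockwise by $3$, using $d_i\neq 1$ from Lemma \ref{lem:indispensable-Heuberger}. The only difference is cosmetic: the paper pads $M$ to a square matrix and appends $d_ie_i$ for every $i$, treating $d_i=0$ blocks as doubly infinite paths, whereas you append only for $d_i>1$ and discard the zero rows.
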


\begin{proof}
Let $M$ be a Heuberger matrix for $X=\text{Cay}(G,S)$.  We may insert zero rows and zero columns without changing the chromatic number, so we may assume that $M$ is an $m\times m$ square matrix.

Let $y_1,\dots,y_m$ be the columns of $M$.  Let $d_i$ be the gcd of the $i$th row of $M$, for all nonzero rows of $M$.  If the $i$th row of $M$ is a zero row, then let $d_i=0$.  By Lemma \ref{lem:indispensable-Heuberger}, we have that $d_i\neq 1$ for all $i$.  For all $j=1,\dots,m$, let $e_j\in\mathbb{Z}^m$ be the column vector with a $1$ in the $j$th position and $0$ elsewhere.  We then have the following:\[M^\text{SACG}\xrightarrow{\ocirc}(y_1\;\;\cdots\;\;y_m\;\;d_1 e_1\;\;\cdots\;\;d_m e_m)^\text{SACG}= (d_1 e_1\;\;\cdots\;\;d_m e_m)^\text{SACG}.\]Here we first obtain a homomorphism by appending $m$ columns.  Then we delete the first $m$ columns of the original matrix; this does not change the associated graph, because those columns are in the $\mathbb{Z}$-span of the last $m$ columns. Therefore \[\chi(X)\leq\chi((d_1 e_1\;\;\cdots\;\;d_m e_m)^\text{SACG}).\]The matrix $(d_1 e_1\;\;\cdots\;\;d_m e_m)$ has a block structure---indeed, it is diagonal---and so by the results of Section \ref{subsection-Heuberger-matrices}, we have that $\chi((d_1 e_1\;\;\cdots\;\;d_m e_m)^\text{SACG})$ equals the maximum of $\chi((d_i)^\text{SACG})$ for $i=1,\dots,m$.  Again using the results of Section \ref{subsection-Heuberger-matrices} as well as the fact that $d_i\neq 1$ for all $i$, we have that $\chi((d_i)^\text{SACG})\leq 3$ for all $i$.  The result follows.\end{proof}

As noted in the introduction, Theorem \ref{thm:abelian-minimal-3-colorable} is a special case of the main theorem in \cite{Garcia}, where it is proved for nilpotent groups.  

For circulant graphs, Theorem \ref{thm:abelian-minimal-3-colorable} can be stated in terms of gcds:

\begin{Cor}\label{cor:circulant-min}Let $a_1,\dots,a_k,n$ be positive integers.  If $\gcd(a_1,\dots,a_k,n)=1$ and $\gcd(a_1,\dots,\hat{a_i},\dots,a_k,n)>1$ for all $i=1,\dots,k$, then $\chi(C_n(a_1,\dots,a_k))\leq 3$.\end{Cor}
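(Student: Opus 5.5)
The plan is to derive Corollary \ref{cor:circulant-min} directly from Theorem \ref{thm:abelian-minimal-3-colorable}, applied with $G=\mathbb{Z}_n$ and $S=\{\pm a_1,\dots,\pm a_k\}$.

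First I will translate the gcd hypotheses into group-theoretic statements. For integers $b_1,\dots,b_r$, the subgroup of $\mathbb{Z}_n$ generated by their residues is the subgroup generated by $\gcd(b_1,\dots,b_r,n)$. Hence this subgroup is all of $\mathbb{Z}_n$ exactly when that gcd equals $1$. This is the same fact that underlies the connectivity lemma in Section \ref{subsubsection-circulants}. Consequently:
\begin{itemize}
\item the hypothesis $\gcd(a_1,\dots,a_k,n)=1$ says that $S$ generates $\mathbb{Z}_n$;
\item the hypothesis $\gcd(a_1,\dots,\hat{a_i},\dots,a_k,n)>1$ says that $S\setminus\{\pm a_i\}$ does not generate $\mathbb{Z}_n$.
\end{itemize}
Thus each $a_i$ is indispensable, and $S$ is a minimal generating set.

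One point needs care before invoking the theorem. The residues $a_i$ might coincide, or satisfy $a_i\equiv -a_j \pmod n$. In that case the set $S$ is smaller than the list, and "minimal" must be read relative to the set.

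This causes no problem. If $a_i\equiv\pm a_j$ with $i\neq j$, then removing $a_j$ from the list leaves $a_i$, which generates the same subgroup. The gcd with $a_j$ omitted would then equal the full gcd, namely $1$, contradicting the hypothesis. So the hypotheses force the classes $\{\pm a_i\}$ to be distinct, and every element of $S$ is indispensable in the sense of the paper. The same argument applies to $a_i\equiv 0$: omitting it leaves the gcd equal to $1$, so this case cannot occur either.

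With $S$ shown to be minimal, Theorem \ref{thm:abelian-minimal-3-colorable} gives $\chi(\text{Cay}(\mathbb{Z}_n,S))\leq 3$. By definition this graph is $C_n(a_1,\dots,a_k)$, which proves the corollary. No step is a real obstacle; the only subtlety is the duplicate-generator check above.
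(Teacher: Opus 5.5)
Your proposal is correct and matches the paper, which presents this corollary simply as Theorem \ref{thm:abelian-minimal-3-colorable} for $G=\mathbb{Z}_n$ restated in gcd language, using the same translation you give: a list together with $n$ has gcd $1$ exactly when the list generates $\mathbb{Z}_n$. Your check ruling out $a_i\equiv\pm a_j$ and $a_i\equiv 0$ is valid and harmless, though not strictly needed, since $S\setminus\{\pm a_i\}\subseteq\{\pm a_j : j\neq i\}$ already makes each $a_i$ indispensable.
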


The notation $\hat{a_i}$ indicates that all $a_j$ are included except $a_i$.

\begin{Ex}
Cor. \ref{cor:circulant-min} shows that $\chi(C_{30}(6,10,15))\leq 3$.    By Lemma \ref{lemma-circulant-bipartite}, we have that $\chi(C_{30}(6,10,15))\geq 3$.  Therefore $\chi(C_{30}(6,10,15))= 3$.
\end{Ex}

\subsection{Circulant graphs with two generators}\label{sec:two-gen}

The following theorem completely determines the chromatic number of a circulant graph with two generators.  The goal of this section is to provide a new and substantially more intuitive proof of this theorem.

\begin{Thm}[{\cite[Theorem 3]{Heuberger}}]\label{theorem-Heubergers}Let $C_n(a,b)$ be a circulant graph with $0<a<n$ and $0<b<n$, such that $\text{gcd}(a,b,n)=1$.  Then \[\chi(C_n(a,b)) = \begin{cases}

2 & \text{if }a\text{ and }b\text{ are both odd, but }n\text{ is even}
\\

5 & \text{if }n=5\text{ and }a\equiv \pm 2b\Mod 5\\

4 & \text{if }n=13,\text{ and }\; a\equiv \pm 5b\Mod {13}\\

4 & \text{if }(i)\; n\neq 5, \text{ and } (ii)\;3\nmid n,\text{ and }(iii)\; a\equiv \pm 2b\Mod {n}\text{ or  }b\equiv \pm 2a\Mod {n}\\

3 & \text{otherwise.}\end{cases}\]\end{Thm}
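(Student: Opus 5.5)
We would follow the strategy announced in the introduction: reduce to the case where one generator is dispensable, encode the graph by a $2\times 2$ Heuberger matrix, and then use homomorphisms to push everything down to finitely many small graphs, which are colored by exhaustion. Lower bounds are handled separately.

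\textbf{Reductions.} If $\gcd(a,n)>1$ and $\gcd(b,n)>1$, then $\{a,b\}$ is minimal. Theorem \ref{thm:abelian-minimal-3-colorable} gives $\chi\le 3$, and Lemma \ref{lemma-circulant-bipartite} decides between $2$ and $3$. We must also check that none of the $4$- or $5$-cases can occur here. This holds because $a\equiv\pm2b$ (or $\pm 5b$ when $n=13$) with $\gcd(b,n)>1$ would force $\gcd(a,b,n)>1$, unless $n$ is prime; primality would make both gcds equal to $1$.

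Otherwise, after swapping, $\gcd(b,n)=1$. Multiplying by $b^{-1}$ gives $C_n(a,b)\cong C_n(\alpha,1)$ with $a\equiv\alpha b$. A Heuberger matrix is then
\[M=\begin{pmatrix}1&0\\-\alpha&n\end{pmatrix},\]
and the problem is to compute $\chi(C_n(1,\alpha))$ for $1<\alpha<n-1$, using the symmetry $\alpha\mapsto -\alpha$.

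\textbf{Upper bounds.} For this matrix we would build the ``extension ladder homomorphisms''. These are homomorphisms $C_n(1,\alpha)\to C_{n'}(1,\alpha')$ with $n'<n$, obtained by appending a column such as $(k,\,m)^t$ and reducing by column operations. A typical example is appending $(1,-\alpha+\ldots)$-type columns, which amounts to choosing $n'$ with $\alpha'\equiv\alpha$ compatible modulo $n'$. The simplest version: if $n=qn'+r$ with suitable residues, map $x\mapsto \lfloor x n'/n\rfloor$-style.

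In Heuberger-matrix language, the aim is an integer vector $(s,t)$ such that $(1\;\;\ \alpha)$ reduces to a small $1\times1$ matrix or a small $2\times2$ matrix after appending $(s,t)^t$. The key claim is the following. For every $(n,\alpha)$ outside a finite list, there is a homomorphism to either an odd cycle $(m)^{\mathrm{SACG}}$, a $3$-colorable $C_{m}(1,\beta)$ with $m$ bounded, or $C_{m}(1,2)$ with $3\mid m$.

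To produce these, we would append a column $(u,v)^t$ where $v-u\alpha\equiv 0$ modulo a small target. Equivalently, we pick $u,v$ small with $u\alpha\equiv v\pmod n$, which Dirichlet/pigeonhole guarantees with $|u|,|v|\lesssim\sqrt n$. Then $\gcd$-reduction of the resulting matrix yields a circulant on $|\det|=|u n\cdot\ldots|$ vertices. Iterating this (the ``ladder'') decreases $n$ until it is below some explicit bound, say $n\le 30$. There, $\chi\le3$ is verified by computer or by hand, except in the listed exceptional families: $\alpha\equiv\pm2,\pm2^{-1}$, and $n=5,13$.

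For $\alpha\equiv\pm2$ we would give an explicit $4$-coloring, and a $3$-coloring when $3\mid n$, namely $x\mapsto x \bmod 3$. The main obstacle is choosing the ladder so that it never lands in an exceptional small graph when the original was not exceptional. We would try first to show that the appended column can always be chosen in two inequivalent ways, and that at least one avoids the $\pm2$ classes.

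\textbf{Lower bounds.} Lemma \ref{lemma-circulant-bipartite} gives the $2$ versus $\ge3$ dichotomy. For $n=5$ with $\alpha=\pm2$ the graph is $K_5$. For $\alpha\equiv\pm2$ with $3\nmid n$, we have $C_n(1,2)$, whose independent sets have size at most $\lfloor n/3\rfloor$. Since $3\nmid n$, this gives $\chi>n/\lfloor n/3\rfloor\cdot$, i.e.\ $\chi\ge4$. For $n=13$, $\alpha=\pm5$, the independence number of $C_{13}(1,5)$ is $3$, so $\chi\ge\lceil 13/3\rceil=5$?? This must be checked; the claimed value is $4$, so $\alpha(C_{13}(1,5))=4$, and a direct argument rules out $3$-colorability by exhaustion on the $13$ vertices.
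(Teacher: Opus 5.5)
Your reductions are fine; checking that the $4$/$5$ cases cannot occur when $\{a,b\}$ is minimal is a nice touch. Your lower bound for $C_n(1,2)$ via $\alpha(C_n(1,2))\le\lfloor n/3\rfloor$ is a valid and simpler substitute for the paper's Diamond-is-Forever argument.

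The genuine gap is the upper bound in the generic ``$3$'' case, which is the heart of the theorem.
\begin{itemize}
\item Every homomorphism in the Heuberger-matrix toolkit (appending a column, merging two rows, dividing a column by a common factor) is induced by a surjective group homomorphism. Its target is therefore a Cayley graph on a quotient of $\mathbb{Z}_n$, of order dividing $n$.
\item For $n$ prime, say $C_{101}(1,10)$, which has $\chi=3$, no such map reduces $n$ at all. So no ladder built this way ever reaches a bounded range.
\item The Dirichlet step does not change this. A short $(u,v)$ with $u\alpha\equiv v\pmod n$ is, up to the sign convention, already a relation, i.e.\ already in the column span of $M$. Appending it changes nothing, and the index stays $n$.
\item The suggested map $x\mapsto\lfloor xn'/n\rfloor$ is never checked. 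In general there is no homomorphism of this kind into a genuine circulant $C_{n'}(1,\alpha')$; the paper's maps work only because their targets carry extra edges.
\item The issue you flag as the ``main obstacle'' (never landing on an exceptional graph when the source is $3$-chromatic) is left unresolved. That is exactly where spurious bounds of $4$ or $5$ would enter.
\end{itemize}

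The missing idea is a non-algebraic homomorphism on a grid model. With $2b\le n$, the paper writes $C_n(1,b)\cong\text{Gr}(a,r,c)$, where $n=ab+r$, $0\le r<b$, and $c=b-r$. It then uses the extension ladder maps $f_{A2},f_{R2},f_{C2}$, which slide a block of rows or columns by $2$. These send grid-like edges to grid-like edges. They are homomorphisms into the augmented graphs $\text{Gr}^{A2,R2,C2}(a,r,c)$, which are not circulants. Consequently every $\text{Gr}(a',r',c')$ with $a'\ge 3$ maps to one of finitely many small base graphs, chosen according to the parities of $a',r',c'$, and these are colored by hand.

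For $a'=2$ with $r'$ odd, sliding by $2$ creates a $K_4$ in the target. There the paper uses slides $f_{R(\lambda,p)},f_{C(\lambda,p)}$ of other lengths and positions, together with a case split modulo $3$. The base graphs are chosen so as never to pass through the exceptional graphs $\text{Gr}(2,3,2)\cong C_{13}(1,5)$ and $\text{Gr}(2,1,c)\cong C_{2c+3}(1,2)$ with $3\nmid c$.

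Without some mechanism of this sort, your proposal establishes only the lower bounds, the bipartite case, and the minimal-generator case.
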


\begin{proof}We begin by applying Theorem \ref{thm:abelian-minimal-3-colorable}. This will allow us to restrict our attention to the case where one of the generators is $1$.  To see that this is so, suppose that $C_n(a,b)$ is a connected circulant graph, so that $\text{gcd}(a,b,n)=1$.  By Cor. \ref{cor:circulant-min}, if $\text{gcd}(a,n)>1$ and $\text{gcd}(b,n)>1$, then $\chi(C_n(a,b))\leq 3$.  Without loss of generality, then, we may assume that $\text{gcd}(a,n)=1$.  But then $a$ has a multiplicative inverse $x$ modulo $n$, so multiplication by $x$ defines a graph isomorphism between $C_n(a,b)$ and $C_n(1,bx)$.  We note that the conditions in Theorem \ref{theorem-Heubergers} are invariant under multiplication by $x$.  Thus it suffices to prove the theorem under the assumption that $a=1$.  Moreover, replacing $b$ by $n-b$ if necessary, we may also assume that $0<2b\leq n$.

With those restrictions in place, our overall strategy is as follows.  We wish to find the chromatic numbers of the graphs $C_n(1,b)$.  Lower bounds can be established by standard methods; the main task is to find the correct upper bounds.  Towards this end, we construct a finite auxiliary family of graphs, defined below.  Every graph $C_n(1,b)$, we show, admits a graph homomorphism to a member of our finite auxiliary family of graphs.  Coloring the members of this finite set of graphs, then, provides upper bounds for chromatic numbers of $C_n(1,b)$ for all $b$.  There are a small handful of cases for which this upper bound does not match the previously obtained lower bound, and we can handle those cases individually.

\vspace{.1in}

We now elucidate the details.

First we obtain lower bounds.  By Lemma \ref{lemma-circulant-bipartite}, we have that $\chi(C_n(1,b))=2$ if and only if $n$ is even and $b$ is odd.  From now on we assume $\chi(C_n(1,b))\geq 3$.  Now suppose that $3\nmid n$ and $b\equiv \pm 2\;(\text{mod } n)$.  The vertices $0, 1, 2, 3$ form a ``diamond'' with endpoints $0$ and $3$ --- that is, they induce a  subgraph on those four vertices which would be complete if an edge between $0$ and $3$ were added.  As in \cite{Tim}, we define the ``Diamond is Forever subgroup'' $D(\mathbb{Z}_n,\{\pm 1,\pm 2\})$ to be the subgroup of $\mathbb{Z}_n$ generated by all endpoints of diamonds, one endpoint of which is $0$.  It is straightforward to verify that in any proper $3$-coloring of $C_n(1,2)$, all elements of $D(\mathbb{Z}_n,\{\pm 1,\pm 2\})$ must receive the same color.  However, because $3\nmid n$, we have that $D(\mathbb{Z}_n,\{\pm 1,\pm 2\})=\mathbb{Z}_n$.  In particular $0$ and $1$ receive the same color, a contradiction.  Thus $\chi(C_n(1,b))\geq 4$ in this case.  If $1\equiv \pm 2b\;(\text{mod } n)$, then multiplication by $2$ gives us $C_n(1,b)\cong C_n(1,2)$.  By what we just proved, the latter graph has chromatic number at least $4$.  If $n=\pm 5$ and $1\equiv\pm 2b\;(\text{mod }5)$, then $C_n(1,b)$ is a complete graph on $5$ vertices and so has chromatic number $5$.  In the case when $n=\pm 13$ and $1\equiv\pm 5b\;(\text{mod }13)$, then our graph is isomorphic to $C_{13}(1,5)$, and it can be shown directly that the chromatic number of this graph is $4$---see \cite{Cervantes-Krebs, Heuberger}. 

\vspace{.1in}

Next we establish matching upper bounds.  Towards this end we redraw the graphs $C_n(1,b)$ so that they resemble grids.  This will facilitate the construction of certain graph homomorphisms.  More precisely, we proceed to define a new graph isomorphic to $C_n(1,b)$.

Let $a,r,c\in\Z$ such that $a,c>0$ and $r\geq 0$.  Let $b=r+c$ and $n=ab+r$.  We define a graph $\text{Gr}(a,r,c)$ as follows.  Let \[V(a,r,c) = \{(x,y)\in\mathbb{Z}^2 \mid 0 \leq x < b,\ 0 \leq y < a\} \cup \{(x,a)\in\mathbb{Z}^2 \mid 0 \leq x < r\}.\] The set $V(a,r,c)$ will be the vertex set of $\text{Gr}(a,r,c)$.  Next we define four different sets of edges; the edge set $E(a,r,c)$ of $\text{Gr}(a,r,c)$ will be the union of these four sets.  In these definitions the variables $x$ and $y$ always take values in the integers.

\begin{enumerate}
    \item The set of \emph{grid-like $+1$ edges} is $$\set{\set{(x,y),(x+1,y)} \mid 0\leq x < b-1,\ 0\leq y<a} \cup \set{\set{(x,a),(x+1,a)} \mid 0 \leq x < r-1}.$$
    \item The set of \emph{non-grid-like $+1$ edges} is \[\set{\set{(b-1,y),(0,y+1)} \mid 0 \leq y < a} \cup \set{\set{(r-1,a),(0,0)}}\text{ when }r\neq0, \text{ and}\]\[\set{\set{(b-1,y),(0,y+1)} \mid 0 \leq y < a-1} \cup{\{\{(b-1,a-1),(0,0)\}\}}\text{ when }r=0.\]
    \item The set of \emph{grid-like $+b$ edges} is $$\set{\set{(x,y),(x,y+1)} \mid 0 \leq x < b,\ 0 \leq y < a-1} \cup \set{\set{(x,a-1),(x,a)} \mid 0 \leq x < r}.$$
    \item The set of  \emph{non-grid-like $+b$ edges} is $$\set{\set{(x,a),(x+c,0)} \mid 0 \leq x < r} \cup \set{\set{(x,a-1),(x-r,0)} \mid r \leq x < b}.$$
\end{enumerate}

As an example, the $+1$ and $+8$ edges for $\text{Gr}(4,4,4)$ are shown in Figures \ref{fig:grid-edges} and \ref{fig:grid-edges-2}.  Grid-like edges are either horizontal or vertical, whereas non-grid-like edges are neither.

\begin{figure}[ht]
    \centering
    \begin{tikzpicture}
        \foreach \x in {0,1,2,3,4,5,6,7}
        \foreach \y in {0,1,2,3} {\fill (\x,\y) circle (3pt);}
        \foreach \x in {0,1,2,3} {\fill (\x,4) circle (3pt);}
        \foreach \x in {0,1,2,3,4,5,6}
        \foreach \y in {0,1,2,3}{\draw (\x,\y) -- (\x+1,\y);}
        \foreach \x in {0,1,2} {\draw (\x,4) -- (\x+1,4);}
        \foreach \y in {0,1,2,3} {\draw[red] (7,\y) -- (0,\y+1);}
        \draw[red] (3,4) -- (0,0);
    \end{tikzpicture}
    \caption{The $+1$ edges of $\text{Gr}(4,4,4)$}
    \label{fig:grid-edges}\end{figure}

\begin{figure}
    \centering
    \begin{tikzpicture}
        \foreach \x in {0,1,2,3,4,5,6,7}
        \foreach \y in {0,1,2,3} {\fill (\x,\y) circle (3pt);}
        \foreach \x in {0,1,2,3} {\fill (\x,4) circle (3pt);}
        \foreach \x in {0,1,2,3,4,5,6,7}
        \foreach \y in {0,1,2} {\draw (\x,\y) -- (\x,\y+1);}
        \foreach \x in {0,1,2,3} {\draw (\x,3) -- (\x,4);}
        \foreach \x in {0,1,2,3} {\draw[red] (\x,4) -- (\x+4,0);}
        \foreach \x in {4,5,6,7} {\draw[red] (\x,3) -- (\x-4,0);}
    \end{tikzpicture}
\caption{The $+8$ edges of $\text{Gr}(4,4,4)$}
\label{fig:grid-edges-2}
\end{figure}

Consider the circulant graph $C_n(1,b)$, where $b \nmid n$ and $2b < n$. Let $a,r,c \in \N$ such that 
    $n = ab+r$ and $0 \leq r < b$ and $c = b-r$.  (Observe, by the way, that this implies that $a\geq 2$---we'll make use of that later.)  We claim that $C_n(1,b)$ is isomorphic to $\text{Gr}(a,r,c)$.  To see this, for $0\leq k<n$, we map $k\mapsto (x,y)$, where $k=x+yb$ with $0\leq x<b$.  It is straightforward to verify that this mapping sends pairs of adjacent vertices to pairs of adjacent vertices, and pairs of non-adjacent vertices to pairs of non-adjacent vertices.  In particular, $+1$ edges correspond to edges $\{k, k+1\}$ in $C_n(1,b)$, and $+b$ edges correspond to edges $\{k, k+b\}$ in $C_n(1,b)$.

\vspace{.1in}

\subsubsection{The case where $a'\geq 3$}

Next we define some functions between the vertex sets of these graphs.  \[
\begin{array}{l}
    f_{A2}\colon V(a,r,c)\to V(a-2,r,c),\quad f_{A2}(x,y) = \begin{cases}
        (x,y) & 0 \leq y < 2 \\
        (x,y-2) & \text{otherwise}
    \end{cases} \\
    f_{R2}\colon V(a,r,c)\to V(a,r-2,c),\quad f_{R2}(x,y) = \begin{cases}
        (x,y) & 0 \leq x < r-2 \\
        (x-2,y) & \text{otherwise}
    \end{cases} \\
    f_{C2}\colon V(a,r,c)\to V(a,r,c-2),\quad f_{C2}(x,y) = \begin{cases}
        (x,y) & 0 \leq x < r+c-2 \\
        (x-2,y) & \text{otherwise}
    \end{cases}
\end{array}
\]

These three functions are defined when $a\geq 4$, $r\geq 4$, and $c\geq 4$, respectively.  Technically these are not three functions but three families of functions, as there is a dependence on the values of $a$, $r$, and $c$ to define the domains and codomains.  As our notation is cluttered enough already as it is, we saw no need to make it more so by adding three indices.

We call $f_{A2}, f_{R2},$ and $f_{C2}$ \emph{extension ladder homomorphisms}.  The reason for this terminology is that these functions map grid-like edges to grid-like edges; this reminded us of sliding an extension ladder so that some of its rungs are on top of one another.  The $2$ in the subscripts refers to the fact that the functions ``slide'' some vertices by $2$ units.  Later we will define similar functions, sliding by different amounts or in different ways, and we will also call these extension ladder homomorphisms.  Sliding by $2$ units ensures that all grid-like edges map to grid-like edges.  For example, $f_{A2}$ maps the grid-like $+b$ edge $\{(0,1),(0,2)\}$ to the grid-like $+b$ edge $\{(0,1),(0,0)\}$ --- the vertex $(0,1)$ remains in place while the vertex $(0,2)$ slides down two units to $(0,0)$, where it is still adjacent to $(0,1)$.  In most other cases the endpoints of a grid-like edge either both remain fixed or both slide by $2$ units; in either case, they map to another grid-like edge.

To make the extension ladder homomorphisms into honest-to-goodness graph homomorphisms, we must add edges to the graphs $\text{Gr}{(a,r,c)}$.  We define $\text{Gr}^{A2, R2, C2}{(a,r,c)}$ to be the graph with vertex set $V(a,r,c)$ whose edges are the images of edges in $\text{Gr}{(a',r',c')}$ under any possible finite composition of extension ladder homomorphisms $f_{A2}, f_{R2}, f_{C2}$, for some integers $a',r',c'$.

For example, take $a=4, r=4, c=4, a'=8, r'=6, c'=6$.  The non-grid-like $+1$ edge $\{(11,1),(0,2)\}$ in $\text{Gr}{(a',r',c')}$ maps to $\{(7,1),(0,0)\}$ under $f_{A2}^2\circ f_{R2}\circ f_{C2}$.  (We write $g^k$ to mean the function $g$ composed with itself $k$ times.)  So $\{(7,1),(0,0)\}$ is an edge in $\text{Gr}^{A2, R2, C2}{(a,r,c)}$.  Note that $\{(7,1),(0,0)\}$ is not an edge in $\text{Gr}{(a,r,c)}$, however.

We consider the identity function to be a vacuous composition of $f_{A2},f_{R2},f_{C2}$.  Consequently $\text{Gr}{(a,r,c)}$ is a subgraph of $\text{Gr}^{A2, R2, C2}{(a,r,c)}$.

Suppose that $a$ and $a'$ have the same parity; that $r$ and $r'$ have the same parity; that $c$ and $c'$ have the same parity; and that $a'\geq a$ and $r'\geq r$ and $c'\geq c$.  Then $f^{(a'-a)/2}_{A2}\circ f^{(r'-r)/2}_{R2}\circ f^{(c'-c)/2}_{C2}$ is a graph homomorphism from $\text{Gr}(a',r',c')$ to $\text{Gr}^{A2, R2, C2}{(a,r,c)}$.  (Indeed, we ``rigged'' the definition of $\text{Gr}^{A2, R2, C2}{(a,r,c)}$ precisely so that such homomorphisms would exist.)  So for fixed values of $a,r,c$, we have that $\chi(\text{Gr}^{A2, R2, C2}{(a,r,c)})$ provides an upper bound for $\chi(\text{Gr}{(a',r',c')})$ for all such $a',r',c'$.

Generally speaking, most edges in $\text{Gr}^{A2, R2, C2}(a,r,c)$ tend to be in $E(a,r,c)$.  That's partly because grid-like edges map to grid-like edges under extension ladder homomorphisms.  Moreover, one can check that non-grid-like $+1$ edges map to non-grid-like $+1$ edges with one exception: $\{(b-1,1),(0,2)\}$ maps to $\{(b-1,1),(0,0)\}$ under $f_{A2}$.  Thus $\{(b-1,1),(0,0)\}$ is also an edge in $\text{Gr}^{A2, R2, C2}(a,r,c)$.  That leaves only the non-grid-like $+b$ edges; once one accounts for where they can map under compositions of extension ladder homomorphisms, one then knows the entire edge set of $\text{Gr}^{A2, R2, C2}(a,r,c)$.

For example, let us determine the edge set of $\text{Gr}^{A2, R2, C2}(4,4,4)$.  As discussed in the previous paragraph, the key is to find all images of non-grid-like $+b$ edges under compositions of extension ladder homomorphisms.  We consider, for instance, when such an image can have $(0,4)$ as an endpoint.  In $\text{Gr}(4,4,4)$, we have that $\{(0,4),(4,0)\}$ is a non-grid-like $+8$ edge, so the identity mapping gives us $(4,0)$ as such an endpoint.  But $(6,0)$ is another such endpoint; it occurs after applying $f_{C2}$ once.  So are $(0,0)$ and $(2,0)$.  No other such endpoints occur, as can be seen by considering all other finite compositions of $f_{A2}$, $f_{R2}$, and $f_{C2}$.  We then apply the same reasoning to the seven other vertices along the ``top'' of the vertex set, that is, all $(x,y)\in V(4,4,4)$ such that $(x,y+1)\notin V(4,4,4)$.  In this way we find that:

$(0,4)$, $(2,4)$, and $(6,1)$ are adjacent to $(0,0), (2,0), (4,0), (6,0)$.

$(1,4)$, $(3,4)$, and $(7,1)$ are adjacent to $(1,0), (3,0), (5,0), (7,0)$.

$(4,3)$ is adjacent to $(0,0)$.

$(4,4)$ is adjacent to $(1,0)$.

From our earlier discussion of non-grid-like $+1$ edges, we have that $(7,1)$ is adjacent to $(0,0)$.  The remaining edges of $\text{Gr}^{A2, R2, C2}(4,4,4)$ are precisely the elements of $E(4,4,4)$.

\vspace{.1in}

We sometimes wish to consider compositions not of all three extension ladder homomorphisms but only some of them.  We indicate this by writing the subscripts of the extension ladder homomorphisms being composed.  For example, $\text{Gr}^{A2, R2}(a,r,c)$ is defined in the same manner as $\text{Gr}^{A2, R2,C2}(a,r,c)$, except that we will allow only compositions of $f_{A2}$ and $f_{R2}$.

For all integers $a'\geq 3$, $r'\geq 0$, $c'\geq 1$, then, by repeated application of extension ladder homomorphisms, we find that there is a graph homomorphism from $\text{Gr}(a',r',c')$ to a graph in the first column in Table \ref{tab:chi-values-a-at-least-3}.  To find such a graph, look for one in the table whose parameters $a,r,c$ match the parity of $a',r',c'$, respectively.  The superscript of the graph in the table indicates that extension ladder homomorphisms to be iteratively composed to provide the desired graph homomorphism.  The final column of Table \ref{tab:chi-values-a-at-least-3} shows the chromatic number of the corresponding graph.  In this way we obtain upper bounds for the chromatic number of $\text{Gr}(a',r',c')$.

These chromatic numbers were computed by hand, without the assistance of a computer program.  The colorings (as with all colorings of base graphs throughout this section) can be found at \cite{Ahmed-notes}.

Observe from that table that when $n$ is even and $b$ is odd, we get an upper bound of $2$.  Extension ladder homomorphisms preserve the parity of $n$ and $b$, so from this we get $\chi(C_n(1,b))=2$ in such cases.

From Table \ref{tab:chi-values-a-at-least-3} we have $\chi(\text{Gr}^{A2}(3,0,1))=\chi(\text{Gr}^{A2}(4,0,1))=\chi(\text{Gr}^{A2}(3,1,1))=\chi(\text{Gr}^{A2}(4,1,1))=4$.  Thus $\chi(\text{Gr}(a',0,1))\leq 4$ and $\chi(\text{Gr}(a',1,1))\leq 4$ for all $a'\geq 3$.  But $\text{Gr}(a',0,1)\cong C_{2a'}(1,2)$ and $\text{Gr}(a',1,1)\cong C_{2a'+1}(1,2)$.  Recall from our discussion of lower bounds that $\chi(C_n(1,2))\geq 4$ whenever $3\nmid n$.  This gives us that $\chi(C_n(1,2))=4$ whenever $3\nmid n$ and $n\geq 6$.  When $3\mid n$, we observe directly that reduction modulo $3$ provides a $3$-coloring, so $\chi(C_n(1,2))=3$ in that case.

Every other row in Table \ref{tab:chi-values-a-at-least-3} shows a chromatic number of $3$.  In these cases we also have a lower bound of $3$, and thus the chromatic number of all such graphs is $3$.

\begin{table}
\caption{Chromatic numbers of ``base graphs'' for $a\geq 3$}
\label{tab:chi-values-a-at-least-3}
\begin{center}
\begin{tabular}{c|c|c|c|c|c|c}
$a$ & $r$ & $c$ & $b=r+c$ & $n=ab+r$ & Graph $X$ & $\chi(X)$\\
\hline
\hline
3 & 0 & 1 & 1 & 3 & $\text{Gr}^{A2}(a,r,c)$ & 3\\
\hline
3 & 0 & 2 & 2 & 6 & $\text{Gr}^{A2,C2}(a,r,c)$ & 4\\
\hline
3 & 0 & 3 & 3 & 9 & $\text{Gr}^{A2,C2}(a,r,c)$ & 3 \\
\hline
3 & 1 & 1 & 2 & 7 & $\text{Gr}^{A2}(a,r,c)$ & 4 \\
\hline
3 & 1 & 2 & 3 & 10 & $\text{Gr}^{A2,C2}(a,r,c)$ & 2 \\
\hline
3 & 1 & 3 & 4 & 13 & $\text{Gr}^{A2,C2}(a,r,c)$ & 3 \\
\hline
3 & 2 & 1 & 3 & 11 & $\text{Gr}^{A2,R2}(a,r,c)$ & 3 \\
\hline
3 & 2 & 2 & 4 & 14 & $\text{Gr}^{A2,R2,C2}(a,r,c)$ & 3 \\
\hline
3 & 2 & 3 & 5 & 17 & $\text{Gr}^{A2,R2,C2}(a,r,c)$ & 3 \\
\hline
3 & 3 & 1 & 4 & 15 & $\text{Gr}^{A2,R2}(a,r,c)$ & 3 \\
\hline
3 & 3 & 2 & 5 & 18 & $\text{Gr}^{A2,R2,C2}(a,r,c)$ & 2 \\
\hline
3 & 3 & 3 & 6 & 21 & $\text{Gr}^{A2,R2,C2}(a,r,c)$ & 3 \\
\hline
4 & 0 & 1 & 1 & 4 & $\text{Gr}^{A2}(a,r,c)$ & 2 \\
\hline
4 & 0 & 2 & 2 & 8 & $\text{Gr}^{A2,C2}(a,r,c)$ & 4 \\
\hline
4 & 0 & 3 & 3 & 12 & $\text{Gr}^{A2,C2}(a,r,c)$ & 2 \\
\hline
4 & 1 & 1 & 2 & 9 & $\text{Gr}^{A2}(a,r,c)$ & 4 \\
\hline
4 & 1 & 2 & 3 & 13 & $\text{Gr}^{A2,C2}(a,r,c)$ & 3 \\
\hline
4 & 1 & 3 & 4 & 17 & $\text{Gr}^{A2,C2}(a,r,c)$ & 3 \\
\hline
4 & 2 & 1 & 3 & 14 & $\text{Gr}^{A2,R2}(a,r,c)$ & 2 \\
\hline
4 & 2 & 2 & 4 & 18 & $\text{Gr}^{A2,R2,C2}(a,r,c)$ & 3 \\
\hline
4 & 2 & 3 & 5 & 22 & $\text{Gr}^{A2,R2,C2}(a,r,c)$ & 2 \\
\hline
4 & 3 & 1 & 4 & 19 & $\text{Gr}^{A2,R2}(a,r,c)$ & 3 \\
\hline
4 & 3 & 2 & 5 & 23 & $\text{Gr}^{A2,R2,C2}(a,r,c)$ & 3 \\
\hline
4 & 3 & 3 & 6 & 27 & $\text{Gr}^{A2,R2,C2}(a,r,c)$ & 3 \\
\end{tabular}
\end{center}
\end{table}

For example, again consider $\text{Gr}(4,4,4)$, which is isomorphic to $C_{36}(1,8)$.  Applying $f_{R2}\circ f_{C2}$ gives us a graph homomorphism from $\text{Gr}(4,4,4)$ to $\text{Gr}^{A2,R2,C2}(4,2,2)$, which appears in Table \ref{tab:chi-values-a-at-least-3}.  The graph $\text{Gr}^{A2,R2,C2}(4,2,2)$ is the same as $\text{Gr}(4,2,2)$ except that there are also edges between $(0,4)$ and $(0,0)$; $(1,4)$ and $(1,0)$; $(2,3)$ and $(2,0)$; $(3,3)$ and $(3,0)$; and $(0,0)$ and $(3,1)$.  One can properly $3$-color $\text{Gr}^{A2,R2,C2}(4,2,2)$ as follows:$$\begin{matrix}
    (0,4)\mapsto \text{Y} & (1,4)\mapsto\text{R} & \; & \;\\
    (0,3)\mapsto \text{R} & (1,3)\mapsto \text{B} & (2,3)\mapsto \text{Y} & (3,3)\mapsto \text{B}\\
     (0,2)\mapsto \text{B} & (1,2)\mapsto \text{Y} & (2,2)\mapsto \text{R} & (3,2)\mapsto \text{Y}\\
      (0,1)\mapsto \text{R} & (1,1)\mapsto \text{B} & (2,1)\mapsto \text{Y} & (3,1)\mapsto \text{R}\\

      (0,0)\mapsto \text{B} & (1,0)\mapsto \text{Y} & (2,0)\mapsto \text{B} & (3,0)\mapsto \text{Y}
\end{matrix}$$where R, Y, and B stand for red, yellow, and blue, respectively.  Pulling this coloring back via $f_{R2}\circ f_{C2}$ gives us the following proper coloring of $\text{Gr}(4,4,4)$:$$\begin{matrix}
    (0,4)\mapsto \text{Y} & (1,4)\mapsto\text{R} & (2,4)\mapsto \text{Y} & (3,4)\mapsto\text{R} & \; & \;  & \; & \;\\
    (0,3)\mapsto \text{R} & (1,3)\mapsto \text{B} & (2,3)\mapsto \text{R} & (3,3)\mapsto \text{B} & (4,3)\mapsto \text{Y} & (5,3)\mapsto \text{B} & (6,3)\mapsto \text{Y} & (7,3)\mapsto \text{B}\\
     (0,2)\mapsto \text{B} & (1,2)\mapsto \text{Y} & (2,2)\mapsto \text{B} & (3,2)\mapsto \text{Y} & (4,2)\mapsto \text{R} & (5,2)\mapsto \text{Y} & (6,2)\mapsto \text{R} & (7,2)\mapsto \text{Y}\\
      (0,1)\mapsto \text{R} & (1,1)\mapsto \text{B} & (2,1)\mapsto \text{R} & (3,1)\mapsto \text{B} & (4,1)\mapsto \text{Y} & (5,1)\mapsto \text{R}& (6,1)\mapsto \text{Y} & (7,1)\mapsto \text{R}\\

      (0,0)\mapsto \text{B} & (1,0)\mapsto \text{Y} & (2,0)\mapsto \text{B} & (3,0)\mapsto \text{Y} &(4,0)\mapsto \text{B} & (5,0)\mapsto \text{Y} &(6,0)\mapsto \text{B} & (7,0)\mapsto \text{Y}
\end{matrix}$$

  Effectively, the first two ``columns'' are repeated one time due to $f_{R2}$, and the last two are repeated one time due to $f_{C2}$.  Finally, identifying $\text{Gr}(4,4,4)$ with $C_{36}(1,8)$ gives us the proper $3$-coloring $$\text{BYBYBYBY\;RBRBYRYR\;BYBYRYRY\;RBRBYBYB\;YRYR}.$$  This notation means that $0\mapsto\text{B}, 1\mapsto\text{Y},\dots,35\mapsto\text{R}$.  The spacing reflects the ``rows'' in $\text{Gr}(4,4,4)$.

To recap what we have so far: Let $n$ and $b$ be positive integers with $3b\leq n$.  (This corresponds to the requirement that $a'\geq 3$.)  If $n$ is even and $b$ is odd, then $\chi(C_n(1,b))=2$.  If $b=2$ and $3\nmid n$, then $\chi(C_n(1,b))=4$.  Otherwise, $\chi(C_n(1,b))=3$.

\vspace{.1in}

\subsubsection{The case where $a'=2$}

As mentioned previously, our starting assumption that $b\leq n/2$ is equivalent to $a'\geq 2$.  What's left is to contend with the case where $a'=2$.

We first dispatch with the situation in which $r'$ is even.
 When $r'$ is even and $c'$ is odd, we have that $\text{Gr}(2,r',c')\cong C_{3r'+2c'}(1,r'+c')$, which has chromatic number $2$ by \cref{lemma-circulant-bipartite}. When $r'$ and $c'$ are both even and $r'\geq 2$, then there is a graph homomorphism from $\text{Gr}(2,r',c')$ to $\text{Gr}^{R2,C2}(2,2,2)$.  Similarly, when $c'$ is even and $c'\geq 4$, there is a graph homomorphism from $\text{Gr}(2,0,c')$ to $\text{Gr}^{C2}(2,0,4)$.  One can compute by hand that $\text{Gr}^{R2,C2}(2,2,2)$ and $\text{Gr}^{C2}(2,0,4)$ both have chromatic number $3$.  In the special case $r'=0$ and $c'=2$, we have $b=2$ and $3\nmid n=4$, and indeed $\text{Gr}(2,0,2)\cong K_4$, which has chromatic number $4$.  Going forward, then, we assume $r'$ is odd. (Nevertheless, some of our base graphs have $r$ even, as there may be extension ladder homomorphisms to such graphs from graphs with $r'$ odd; this can occur when we ``slide'' by odd amounts.)

By Lemma \ref{lemma-circulant-bipartite}, if $r'$ is odd, then $\chi(\text{Gr}(2,r',c'))\geq 3$.

When $r'$ is odd, we require a slight adjustment to our approach so far.  That's because then the graph $\text{Gr}^{R2, C2}(2,r',c')$ contains a $4$-clique at the vertices $(b-2,0), (b-1,0), (b-2,1), (b-1,1)$.  Hence $\chi(\text{Gr}^{R2, C2}(2,r',c'))\geq 4$ in those cases, but this gives an upper bound of at least $4$ for many graphs $\text{Gr}(2,r',c')$ whose chromatic number is in fact $3$.  To remedy this situation, we define new extension ladder homomorphisms which may ``slide'' by amounts other than $2$.  Moreover, we require one additional parameter to control the position at which the sliding occurs.  Because the value of $a$ remains constant at $2$ for these, we need only slide $r$ and $c$.

To be precise, let $\lambda\geq 2$ and $p\geq \lambda$ be integers.  We define the functions $f_{R(\lambda,p)}$ and $f_{C(\lambda,p)}$ as follows.

\[
\begin{array}{l}
    f_{R(\lambda,p)}\colon V(2,r,c)\to V(2,r-\lambda,c),\quad f_{R(\lambda,p)}(x,y) = \begin{cases}
        (x,y) & 0 \leq x < r-p \\
        (x-\lambda,y) & \text{otherwise}
    \end{cases} \\
    f_{C(\lambda,p)}\colon V(2,r,c)\to V(2,r,c-\lambda),\quad f_{C(\lambda,p)}(x,y) = \begin{cases}
        (x,y) & 0 \leq x < r+c-p \\
        (x-\lambda,y) & \text{otherwise}
    \end{cases}
\end{array}
\]

We have that $f_{R(\lambda,p)}$ is defined whenever $r\geq \lambda+p$, and $f_{C(\lambda,p)}$ is defined whenever $c\geq \lambda+p$.  Observe that the functions $f_{R2}$ and $f_{C2}$ from earlier are precisely $f_{R(2,2)}$ and $f_{C(2,2)}$, respectively.

The graphs $\text{Gr}^{R(\lambda,p),C(\lambda,p)}(2,r,c)$ as well as $\text{Gr}^{R(\lambda,p)}(2,r,c)$ and $\text{Gr}^{C(\lambda,p)}(2,r,c)$ are defined analogously to $\text{Gr}^{A2,R2,C2}(a,r,c)$ and its cousins.

Regrettably, when $\lambda>2$, the extension ladder homomorphisms $f_{R(\lambda,p)}$ and $f_{C(\lambda,p)}$ do not always map grid-like edges to grid-like edges.  For example, for $r=7$, we have that $f_{R(3,3)}$ sends the grid-like edge $\{(3,2),(4,2)\}$ to the non-grid-like edge $\{(3,2),(1,2)\}$.  However, this creates only three edges of $\text{Gr}^{R(3,3),C(3,3)}(2,r,c)$ not in $E(2,r,c)$, namely $\{(r-1,0),(r-3,0)\}$, $\{(r-1,1),(r-3,1)\}$, and $\{(r-1,2),(r-3,2)\}$.  Similarly, $f_{C(3,3)}$ creates only two additional edges in this way, namely $\{(b-3,0),(b-1,0)\}$ and $\{(b-3,1),(b-1,1)\}$.  So the damage is limited.

Most extension ladder homomorphisms we actually used ``slide'' by $3$.  Thus we find it useful to divvy into nine cases according to the congruence classes of $r'$ and $c'$ modulo $3$.  We begin with the more direct cases and work our way up to those that are more involved.

\paragraph{The case where $r'\equiv c'\equiv 0\;(\text{mod }3)$.}  For all such graphs, there is a graph homomorphism from $\text{Gr}(2,r',c')$ to $\text{Gr}^{R(3,3),C(3,3)}(2,3,3)$, which has chromatic number $3$, as one can compute by hand.  (Recall that we can assume $r'$ is odd, so we needn't consider the case where $r'=0$.  And $c'=0$ cannot occur, as $c'$ is strictly positive.)

\paragraph{The case where $r'\equiv0 \;(\text{mod }3)\text{ and } c'\equiv 1\;(\text{mod }3)$.}  In these cases there is a homomorphism from $\text{Gr}(2,r',c')$ to one of the base graphs in Table \ref{tab:chi-values-a-is-2-r-0-c-1}.  All of those base graphs have chromatic number $3$.  Hence $\chi(\text{Gr}(2,r',c'))=3$ for all graphs under consideration in this sub-case.

\begin{table}
\caption{Chromatic numbers of ``base graphs'' for $a=2$ when $r'\equiv0 \;(\text{mod }3)\text{ and } c'\equiv 1\;(\text{mod }3)$}

\label{tab:chi-values-a-is-2-r-0-c-1}
\begin{center}
\begin{tabular}{c|c|c|c|c|c|c}
$a$ & $r$ & $c$ & $b=r+c$ & $n=ab+r$ & Graph $X$ & $\chi(X)$\\
\hline
\hline
2 & 3 & 1 & 4 & 11 & $\text{Gr}^{R(3,3)}(a,r,c)$ & 3\\
\hline
2 & 3 & 4 & 7 & 17 & $\text{Gr}(a,r,c)$ & 3\\
\hline
2 & 9 & 4 & 13 & 35 & $\text{Gr}^{R(6,6)}(a,r,c)$ & 3\\
\hline
2 & 3 & 7 & 10 & 23 & $\text{Gr}^{C(6,6)}(a,r,c)$ & 3\\
\hline
2 & 9 & 7 & 16 & 41 & $\text{Gr}^{R(6,6),C(6,6)}(a,r,c)$ & 3\\
\hline
2 & 3 & 10 & 13 & 29 & $\text{Gr}^{C(6,6)}(a,r,c)$ & 3\\
\hline
2 & 9 & 10 & 19 & 47 & $\text{Gr}^{R(6,6),C(6,6)}(a,r,c)$ & 3\\
\end{tabular}
\end{center}
\end{table}

\paragraph{The case where $r'\nequiv 0 \;(\text{mod }3)\text{ and } c'\equiv 0\;(\text{mod }3)$.}  In these cases there is a homomorphism from $\text{Gr}(2,r',c')$ to one of the base graphs in Table \ref{tab:chi-values-a-is-2-r-1-2-c-0}.  All of those base graphs have chromatic number $3$.  Hence $\chi(\text{Gr}(2,r',c'))=3$ for all graphs under consideration in this sub-case.

\begin{table}
\caption{Chromatic numbers of ``base graphs'' for $a=2$ when $r'\nequiv 0 \;(\text{mod }3)\text{ and } c'\equiv 0\;(\text{mod }3)$}
\label{tab:chi-values-a-is-2-r-1-2-c-0}
\begin{center}
\begin{tabular}{c|c|c|c|c|c|c}
$a$ & $r$ & $c$ & $b=r+c$ & $n=ab+r$ & Graph $X$ & $\chi(X)$\\
\hline
\hline
2 & 1 & 3 & 4 & 9 & $\text{Gr}^{C(3,3)}(a,r,c)$ & 3\\
\hline
2 & 4 & 3 & 7 & 16 & $\text{Gr}^{R(3,3),C(3,3)}(a,r,c)$ & 3\\
\hline
2 & 5 & 3 & 8 & 21 & $\text{Gr}^{R(3,3),C(3,3)}(a,r,c)$ & 3\\
\end{tabular}
\end{center}
\end{table}

\paragraph{The case where $r'\equiv 2 \;(\text{mod }3)\text{ and } c'\nequiv 0\;(\text{mod }3)$.}  In these cases there is a homomorphism from $\text{Gr}(2,r',c')$ to one of the base graphs in Table \ref{tab:chi-values-a-is-2-r-2-c-1-2}.  (Recall that we may assume $r'$ is odd, so we needn't consider $r'=2$.)    All of those base graphs have chromatic number $3$.  Hence $\chi(\text{Gr}(2,r',c'))=3$ for all graphs under consideration in this sub-case.

\begin{table}
\caption{Chromatic numbers of ``base graphs'' for $a=2$ when $r'\equiv 2 \;(\text{mod }3)\text{ and } c'\nequiv 0\;(\text{mod }3)$}
\label{tab:chi-values-a-is-2-r-2-c-1-2}
\begin{center}
\begin{tabular}{c|c|c|c|c|c|c}
$a$ & $r$ & $c$ & $b=r+c$ & $n=ab+r$ & Graph $X$ & $\chi(X)$\\
\hline
\hline
2 & 5 & 1 & 6 & 17 & $\text{Gr}^{R(3,3)}(a,r,c)$ & 3\\
\hline
2 & 5 & 2 & 7 & 19 & $\text{Gr}^{R(3,3)}(a,r,c)$ & 3\\
\hline
2 & 5 & 4 & 9 & 23 & $\text{Gr}^{R(3,3),C(3,3)}(a,r,c)$ & 3\\
\hline
2 & 5 & 5 & 10 & 25 & $\text{Gr}^{R(3,3),C(3,3)}(a,r,c)$ & 3\\
\end{tabular}
\end{center}
\end{table}

\paragraph{The case where $r'\equiv0 \;(\text{mod }3)\text{ and } c'\equiv 2\;(\text{mod }3)$.}

One of the exceptional cases in \cref{theorem-Heubergers} is $C_{13}(1,5)$, which is isomorphic to $\text{Gr}(2,3,2)$.  The graph $C_{13}(1,5)$ has chromatic number $4$.  Repeatedly applying $f_{R(3,3)}$ to get a graph homomorphism to $\text{Gr}^{R(3,3)}(2,3,2)$, then, cannot give us any upper bound better than $4$ for the chromatic number, but it turns out that we can do better than that.  This compelled us to ``back up a few steps'' and put other base graphs in the table.

In all other cases, there is a homomorphism from $\text{Gr}(2,r',c')$ to one of the $3$-chromatic base graphs in Table \ref{tab:chi-values-a-is-2-r-0-c-2}.

\begin{table}
\caption{Chromatic numbers of ``base graphs'' for $a=2$ when $r'\equiv0 \;(\text{mod }3)\text{ and } c'\equiv 2\;(\text{mod }3)$}
\label{tab:chi-values-a-is-2-r-0-c-2}
\begin{center}
\begin{tabular}{c|c|c|c|c|c|c}
$a$ & $r$ & $c$ & $b=r+c$ & $n=ab+r$ & Graph $X$ & $\chi(X)$\\
\hline
\hline
2 & 3 & 2 & 5 & 13 & $\text{Gr}(a,r,c)\cong C_{13}(1,5)$ & 4\\
\hline
2 & 3 & 5 & 8 & 19 & $\text{Gr}^{C(5,5)}(a,r,c)$ & 3\\
\hline
2 & 3 & 8 & 11 & 25 & $\text{Gr}^{C(5,5)}(a,r,c)$ & 3\\
\hline
2 & 3 & 11 & 14 & 31 & $\text{Gr}^{C(5,5)}(a,r,c)$ & 3\\
\hline
2 & 3 & 14 & 17 & 37 & $\text{Gr}^{C(5,5)}(a,r,c)$ & 3\\
\hline
2 & 3 & 17 & 20 & 43 & $\text{Gr}^{C(5,5)}(a,r,c)$ & 3\\
\hline
2 & 9 & 2 & 11 & 31 & $\text{Gr}^{R(6,6)}(a,r,c)$ & 3\\
\hline
2 & 9 & 5 & 14 & 37 & $\text{Gr}^{R(6,6)}(a,r,c)$ & 3\\
\hline
2 & 9 & 8 & 17 & 43 & $\text{Gr}^{R(6,6),C(6,6)}(a,r,c)$ & 3\\
\hline
2 & 9 & 11 & 20 & 49 & $\text{Gr}^{R(6,6),C(6,6)}(a,r,c)$ & 3\\
\end{tabular}
\end{center}
\end{table}

\paragraph{The case where $r'\equiv c'\equiv 1\;(\text{mod }3)$.}  First we discuss a few exceptional cases.

Observe that $\text{Gr}(2,1,1)$ is isomorphic to $C_5(1,2)$.  This is a complete graph on $5$ vertices and thus has chromatic number $5$.

Whenever $r'=1$, we have that $\text{Gr}(2,1,c')\cong C_{3+2c'}(1,1+c')\cong C_{3+2c'}(1,2)$.  The latter isomorphism is obtained by multiplying by $2$.  We have previously shown that $\chi(C_{3+2c'}(1,2))\geq 4$ in this case, because $3\nmid 3+2c'$.  For the matching upper bound, observe that there is a graph homomorphism to $\text{Gr}^{C(3,3)}(2,1,4)$ provided $c'\geq 4$.  The correct result follows.

For graphs of the form $\text{Gr}(2,4+3k,1)$ for some nonnegative integer $k$, there is a graph homomorphism to $\text{Gr}^{R(3,3)}(2,4,1)$, which has chromatic number $3$, and thus $\chi(\text{Gr}(2,4+3k,1))=3$.

These three possibilities correspond to the three base graphs shown in Table \ref{tab:chi-values-a-is-2-r-1-c-1}.

It remains to deal with graphs $\text{Gr}(2,4+3k,4+3\ell)$ for some nonnegative integers $k,\ell$.  We shall show that all such graphs have chromatic number $3$.  Let $q$ and $s$ be integers such that \[4+3\ell=q(3k+6)+s\text{ and }0\leq s<3k+6=r+2.\]We divide into sub-cases according to whether (i) $s=1$, or (ii) $s$ is even, or (iii) $s$ is odd and $s>1$.

\guillemetright\,\guillemetright\, Sub-case (i): $s=1$.

We apply the following function from $V(2,4+3k,4+3\ell)$ to $V(2,5,6)$:\[\phi=\left(f^{R(2,3)}\right)^{\frac32 k-\frac12}\circ f^{C\left(2,\frac32 k+\frac52\right)}\circ f^{C\left(2,\frac32 k+\frac12\right)}\cdots\circ f^{C\left(2,3\right)}\circ\left(f^{C(3k+6,3k+7)}\right)^q.\](Within the ellipses, the second argument decreases by $2$ each time.  Writing a function to a power indicates that it should be composed with itself that many times.)

Tautologically, $\phi$ defines a graph homomorphism from $\text{Gr}(2,4+3k,4+3\ell)$ to the graph $Y$ with vertex set $V(2,5,6)$ whose edges are precisely the images of edges under $\phi$.  It is straightforward to produce a proper $3$-coloring of $Y$.  Hence $\chi(\text{Gr}(2,4+3k,4+3\ell))=3$.  All such colorings of base graphs for this paper, such as $Y$, can be found at \cite{Ahmed-notes}.

\guillemetright\,\guillemetright\,Sub-case (ii): $s$ is even.

This time we use the function\[\phi=\left(f^{R(2,5)}\right)^{(3k-s+5)/2}\circ\left(f^{R(2,3)}\right)^{(s-2)/2}\circ f^{C\left(2,\frac32 k+\frac{11}2\right)}\circ f^{C\left(2,\frac32 k+\frac72\right)}\cdots\circ f^{C\left(2,3\right)}\circ\left(f^{C(3k+6,3k+10)}\right)^q\circ \left(f^{C(2,2)}\right)^{(s-2)/2}.\]with codomain $V(2,5,9)$.  The rest is similar to sub-case (i).

\guillemetright\,\guillemetright\,Sub-case (iii): $s$ is odd and $s>1$.

This time we use the function\[\phi=\left(f^{R(2,5)}\right)^{(3k-s+6)/2}\circ\left(f^{R(2,3)}\right)^{(s-3)/2}\circ f^{C\left(2,\frac32 k+\frac{13}2\right)}\circ f^{C\left(2,\frac32 k+\frac92\right)}\cdots\circ f^{C\left(2,3\right)}\circ\left(f^{C(3k+6,3k+11)}\right)^q\circ \left(f^{C(2,2)}\right)^{(s-3)/2}.\]with codomain $V(2,5,10)$.  The rest is similar to sub-case (i).

\begin{table}
\caption{Chromatic numbers of ``base graphs'' for $a=2$ when $r'\equiv c'\equiv 1\;(\text{mod }3)$}
\label{tab:chi-values-a-is-2-r-1-c-1}
\begin{center}
\begin{tabular}{c|c|c|c|c|c|c}
$a$ & $r$ & $c$ & $b=r+c$ & $n=ab+r$ & Graph $X$ & $\chi(X)$\\
\hline
\hline
2 & 1 & 1 & 2 & 5 & $\text{Gr}(a,r,c)\cong K_5$ & 5\\
\hline
2 & 1 & 4 & 5 & 11 & $\text{Gr}^{C(3,3)}(a,r,c)$ & 4\\
\hline
2 & 4 & 1 & 5 & 14 & $\text{Gr}^{R(3,3)}(a,r,c)$ & 3\\
\end{tabular}
\end{center}
\end{table}

\paragraph{The case where $r'\equiv1 \;(\text{mod }3)\text{ and } c'\equiv 2\;(\text{mod }3)$.}

The graphs $\text{Gr}(2,1,2)$ and $\text{Gr}(2,1,5)$ fall into the general family of graphs of the form $\text{Gr}(2,1,c')$, which we discussed in the case where $r'\equiv c'\equiv1 \;(\text{mod }3)$; it works the same way here.  For graphs of the form $\text{Gr}(2,4+3k,2)$ for some nonnegative integer $k$, there is a graph homomorphism to $\text{Gr}^{R(3,3)}(2,4,2)$, which has chromatic number $3$, and thus $\chi(\text{Gr}(2,4+3k,2))=3$.  These possibilities correspond to the three base graphs shown in Table \ref{tab:chi-values-a-is-2-r-1-c-2}.

\begin{table}
\caption{Chromatic numbers of ``base graphs'' for $a=2$ when $r'\equiv 1 \;(\text{mod }3)\text{ and } c'\equiv 2\;(\text{mod }3)$}
\label{tab:chi-values-a-is-2-r-1-c-2}
\begin{center}
\begin{tabular}{c|c|c|c|c|c|c}
$a$ & $r$ & $c$ & $b=r+c$ & $n=ab+r$ & Graph $X$ & $\chi(X)$\\
\hline
\hline
2 & 1 & 2 & 3 & 7 & $\text{Gr}(a,r,c)$ & 4\\
\hline
2 & 1 & 5 & 6 & 13 & $\text{Gr}^{C(3,3)}(a,r,c)$ & 4\\
\hline
2 & 4 & 2 & 6 & 16 & $\text{Gr}^{R(3,3)}(a,r,c)$ & 3\\
\end{tabular}
\end{center}
\end{table}

We can show that all the remaining graphs $\text{Gr}(2,4+3k,5+3\ell)$, where $k$ and $\ell$ are nonnegative integers have chromatic number $3$ by using the same method as in the previous case involving graphs $\text{Gr}(2,4+3k,4+3\ell)$.  We divvy into more or less the same sub-cases and apply more or less the same compositions of extension ladder homomorphisms, \emph{mutatis mutandis}.  Details can be found at \cite{Ahmed-notes}.\end{proof}

\begin{Rmk}
    In \cite{Heuberger}, explicit colorings of the graphs $C_n(a,b)$ are given, but without indication as to how these were obtained.  By contrast, with this approach, where the coloring comes from becomes transparent.
    We first properly color one of the finitely many base graphs and then pull back by extension ladder homomorphisms.  The main point is that in this way, one can \emph{find} these colorings, not just verify that they work.  (An exception occurs with the more complicated composition of functions in the cases where $r'\equiv 1\;(\text{mod }3)$ and $c'\nequiv 0\;(\text{mod }3)$.  Those we found by coloring many graphs and then looking for patterns.  So there are some limitations to the utility of the method.)  We note that our colorings have a similar flavor to those in \cite{Heuberger} insofar as they often consist of a ``body'' of some repeating string of colors followed by a small ``tail.''  We hope that the method presented in this paper provides some insight into how one can construct these colorings and why they then have the character they do.  In future research, we plan to attempt to extend this technique to the case of circulant graphs (or more generally, abelian Cayley graphs) with three generators. 
\end{Rmk}

\section*{Acknowledgments}

The third author was supported by funding from Cal-Bridge.  The eighth author was supported by an Edison STEM-NET Student Research Fellowship.

\bibliographystyle{amsplain}
\bibliography{references}

@preamble{"\providecommand{\MR}[1]{}"}

@misc{Ahmed-notes,
  author = {{Ferdous Ahmed, David Asraf, David Bonds, Jonathan Davidson, Yunhee Jang, Mike
Krebs, Anand Prakash, and Edgar Yak-De Padua}},
  title = {Authors' notes: Chromatic numbers of circulants with indispensable generators},
  howpublished = {\url{https://www.calstatela.edu/research/mkrebs}},
}

@article {Barajas-Serra,
    AUTHOR = {Barajas, Javier and Serra, Oriol},
     TITLE = {On the chromatic number of circulant graphs},
   JOURNAL = {Discrete Math.},
  FJOURNAL = {Discrete Mathematics},
    VOLUME = {309},
      YEAR = {2009},
    NUMBER = {18},
     PAGES = {5687--5696},
      ISSN = {0012-365X},
   MRCLASS = {05C15},
  MRNUMBER = {2567972},
       DOI = {10.1016/j.disc.2008.04.041},
       URL = {https://doi.org/10.1016/j.disc.2008.04.041},
}

@article {Cervantes-Krebs,
    AUTHOR = {Cervantes, Jonathan and Krebs, Mike},
     TITLE = {Chromatic numbers of {C}ayley graphs of abelian groups: {A}
              matrix method},
   JOURNAL = {Linear Algebra Appl.},
  FJOURNAL = {Linear Algebra and its Applications},
    VOLUME = {676},
      YEAR = {2023},
     PAGES = {277--295},
      ISSN = {0024-3795,1873-1856},
   MRCLASS = {05C15},
  MRNUMBER = {4622039},
       DOI = {10.1016/j.laa.2023.07.016},
       URL = {https://doi.org/10.1016/j.laa.2023.07.016},
}

@unpublished{Cervantes_2023-preprint,
   title={Chromatic numbers of {C}ayley graphs of abelian groups: A matrix method},
   note={\url{https://arxiv.org/abs/2303.06262}},
   author={Cervantes, Jonathan and Krebs, Mike},
   year={2023}
   }

@misc{Cervantes-Krebs-small-cases,
    note = {\url{https://arxiv.org/abs/2303.06272}},
    year = 2023,
    author = {Jonathan Cervantes and Mike Krebs},
title={Chromatic numbers of {C}ayley graphs of abelian groups: Cases of small dimension and rank}, 
}

@article {Garcia,
    AUTHOR = {Garc\'ia-Marco, Ignacio and Knauer, Kolja},
     TITLE = {Coloring minimal {C}ayley graphs},
   JOURNAL = {European J. Combin.},
  FJOURNAL = {European Journal of Combinatorics},
    VOLUME = {125},
      YEAR = {2025},
     PAGES = {Paper No. 104108, 9},
      ISSN = {0195-6698,1095-9971},
   MRCLASS = {05C15 (05C25)},
  MRNUMBER = {4842625},
       DOI = {10.1016/j.ejc.2024.104108},
       URL = {https://doi.org/10.1016/j.ejc.2024.104108},
}

@mastersthesis{Tim,
  author = "Timothy Harris",
  title = "STANDARDIZED ABELIAN {C}AYLEY GRAPHS AND THEIR CHROMATIC NUMBERS IN FOUR DIMENSIONS",
  school = "California State University, Los Angeles",
  type = "M.{S}. thesis",
  year = "2024",
  url = "https://www.calstatela.edu/faculty/mike-krebs"
}

@article {Heuberger,
    AUTHOR = {Heuberger, Clemens},
     TITLE = {On planarity and colorability of circulant graphs},
   JOURNAL = {Discrete Mathematics},
    VOLUME = {268},
      YEAR = {2003},
    NUMBER = {1},
     PAGES = {153–169},
      ISSN = {0012-365X},
       DOI = {https://doi.org/10.1016/S0012-365X(02)00685-4},
       URL = {https://www.sciencedirect.com/science/article/pii/S0012365X02006854}
}

@article {Ilic,
    AUTHOR = {Ili\'{c}, Aleksandar and Ba\v{s}i\'{c}, Milan},
     TITLE = {On the chromatic number of integral circulant graphs},
   JOURNAL = {Comput. Math. Appl.},
  FJOURNAL = {Computers \& Mathematics with Applications. An International
              Journal},
    VOLUME = {60},
      YEAR = {2010},
    NUMBER = {1},
     PAGES = {144--150},
      ISSN = {0898-1221},
   MRCLASS = {05C15},
  MRNUMBER = {2651892},
MRREVIEWER = {Oriol Serra},
       DOI = {10.1016/j.camwa.2010.04.041},
       URL = {https://doi.org/10.1016/j.camwa.2010.04.041},
}

@article {Meszka,
    AUTHOR = {Meszka, Mariusz and Nedela, Roman and Rosa, Alexander},
     TITLE = {The chromatic number of 5-valent circulants},
   JOURNAL = {Discrete Math.},
  FJOURNAL = {Discrete Mathematics},
    VOLUME = {308},
      YEAR = {2008},
    NUMBER = {24},
     PAGES = {6269--6284},
      ISSN = {0012-365X},
   MRCLASS = {05C15 (05C25)},
  MRNUMBER = {2464916},
MRREVIEWER = {Clemens Heuberger},
       DOI = {10.1016/j.disc.2007.11.065},
       URL = {https://doi.org/10.1016/j.disc.2007.11.065},
}

@article {Nicoloso,
    AUTHOR = {Nicoloso, S. and Pietropaoli, U.},
     TITLE = {Vertex-colouring of 3-chromatic circulant graphs},
   JOURNAL = {Discrete Appl. Math.},
  FJOURNAL = {Discrete Applied Mathematics. The Journal of Combinatorial
              Algorithms, Informatics and Computational Sciences},
    VOLUME = {229},
      YEAR = {2017},
     PAGES = {121--138},
      ISSN = {0166-218X},
   MRCLASS = {05C15 (05C25)},
  MRNUMBER = {3679065},
MRREVIEWER = {B. Naimeh Onagh},
       DOI = {10.1016/j.dam.2017.05.013},
       URL = {https://doi.org/10.1016/j.dam.2017.05.013},
}

@misc{Nicoloso-solo,
note = {\url{https://ssrn.com/abstract=4481469}},
  author = {Nicoloso, Sara},
  title = "Vertex-colouring of some
$3$-chromatic circulant graphs
$C_n(a, b, c)$ with $\text{gcd}(n, a)\;\text{mod }3 = 0$",
  howpublished = "Preprint",
  year = "2023"
}

\end{document}